\tikzset{sgplattice/.style={inner sep=1pt,norm/.style={red!50!blue},char/.style={blue!50!black},
  lin/.style={black!50}},cnj/.style={black!50,yshift=-2.5pt,left=-1pt of #1,scale=0.5,fill=white}}
\newcommand{\Ind}{\operatorname{Ind}}
\newcommand{\RO}{\operatorname{RO}}
\newcommand{\GL}{\operatorname{GL}}
\newcommand{\Aut}{\operatorname{Aut}}
\newcommand{\tr}{\operatorname{tr}}
\newcommand{\Res}{\operatorname{Res}}
\newcommand{\Mod}[1]{\ (\mathrm{mod}\ #1)}
\newcommand{\rank}{\operatorname{rank}}
\newcommand{\PO}{\operatorname{PO}}
\newcommand{\prim}{\operatorname{prim}}
\newcommand{\Ker}{\operatorname{Ker}}
\newcommand\blfootnote[1]{%
  \begingroup
  \renewcommand\thefootnote{}\footnote{#1}%
  \addtocounter{footnote}{-1}%
  \endgroup
}
\newtheorem{theorem}{Theorem}[section]
\newtheorem{lemma}[theorem]{Lemma}
\newtheorem{corollary}[theorem]{Corollary}
\newtheorem{question}[theorem]{Question}
\newtheorem{remark}[theorem]{Remark}
\newtheorem{conjecture}[theorem]{Conjecture}
\newtheorem{problem}[theorem]{Problem}
\newtheorem*{remark*}{Remark}
\newtheorem*{theorem*}{Theorem}
\newtheorem*{conjecture*}{Conjecture}
\newtheorem*{question*}{Question}
\numberwithin{table}{section}
\journal{Journal Name}
\def\ps@pprintTitle{%
 \let\@oddhead\@empty
 \let\@evenhead\@empty
 \def\@oddfoot{}%
 \let\@evenfoot\@oddfoot}
\DeclareMathOperator{\Ima}{Im}
\DeclareMathOperator{\Rea}{Re}
\begin{document}
\begin{frontmatter}


\title{\textbf{A new family of finite Oliver groups satisfying the Laitinen Conjecture}}



\author{Piotr Mizerka}

\begin{abstract}
This paper is concerned with the Laitinen Conjecture. The conjecture predicts an answer to the Smith question \cite{Smith1960} which reads as follows. Is it true that for a finite group acting smoothly on a sphere with exactly two fixed points, the tangent spaces at the fixed points have always isomorphic group module structures defined by differentiation of the action? Using the technique of induction of group representations, we indicate a new infinite family of finite Oliver groups for which the Laitinen Conjecture holds.
\end{abstract}




\end{frontmatter}

\linenumbers
\setcounter{section}{-1}
\nolinenumbers
\section{Introduction}
\blfootnote{
$2010$ \emph{Mathematics Subject Classification.} Primary $57\rm S25$; Secondary $55\rm M35$.

\emph{\textcolor{white}{ggf}Keywords and phrases.} induced character, fixed point, Smith problem, smooth action.}
P. A. Smith raised in $1960$ the following question for finite groups \cite[p. 406, the footnote]{Smith1960}. 
\begin{question}[Smith question]
Is it true that for a finite group acting smoothly on a sphere with exactly two fixed points, the tangent spaces at the fixed points have always isomorphic group module structures defined by differentiation of the action?
\end{question}
Depending on the acting group, there are affirmative, as well as negative answers to this question. Much of the work on this problem has been done by Atiyah and Bott \cite{AtiyahBott1968}, Petrie and his students and collaborators \cite{Petrie1978},\cite{Petrie1979},\cite{Petrie1982} and \cite{Petrie1983}, Cappell and Shaneson \cite{CappellShaneson1980}\cite{Cappell1982}, \cite{Bredon1969}, Illman \cite{Illman1982}, Milnor \cite{Milnor1966}, Laitinen, Morimoto, Pawałowski, Solomon and Sumi, \cite{LaitinenPawalowski1999}, \cite{Morimoto1998},\cite{Morimoto2010}, \cite{PawalowskiSolomon2002}, \cite{MorimotoPawalowski2003}, \cite{Sumi2016}. For a comprehensive survey on the Smith problem, we refer the reader to the work of Pawałowski \cite{Pawalowski2018}.

Assume $G$ is a finite group. Let us call two $\mathbb{R}G$-modules $U$ and $V$ \emph{Smith equivalent} if $U\cong T_x(\Sigma)$ and $V\cong T_y(\Sigma)$ for a smooth action of $G$ on a homotopy sphere $\Sigma$ with exactly two fixed points $x$ and $y$. We say that the \emph{Laitinen Condition} is satisfied for $G$ acting smoothly on a homotopy sphere $\Sigma$ with $\Sigma^G=\{x,y\}$ if $\Sigma^g$ is connected for any $g\in G$ of order $2^k$, where $k\geq 3$. The \emph{real conjugacy class} of an element $g\in G$ is the union $(g)^{\pm}=(g)\cup(g^{-1})$. The \emph{primary number} of $G$ which we denote by $\prim(G)$ is the number of real conjugacy classes of $G$ containing elements whose order is divisible by at least two distinct primes. We call $G$ an \emph{Oliver group} if there does not exist a sequence of subgroups $P\trianglelefteq H\trianglelefteq G$ such that $P$ and $G/H$ are of prime power orders and $H/P$ is cyclic. 

The Laitinen Conjecture proposes negative answers to the Smith question concerning actions on homotopy spheres. The conjecture reads as follows.
\begin{conjecture}\cite[Appendix]{LaitinenPawalowski1999}\label{conjecture:laitinen}
If $G$ is an Oliver group with $\prim(G)\geq 2$, then there exist non-isomorphic $\mathbb{R}G$-modules $U$ and $V$ which are Smith equivalent and the action of $G$ on the homotopy sphere in question satisfies the Laitinen Condition.
\end{conjecture}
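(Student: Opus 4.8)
The plan is to separate the statement into an algebraic part, which produces a candidate pair of non-isomorphic modules, and a geometric part, which realises that pair as the tangent data of a smooth action obeying the Laitinen Condition. On the algebraic side I would exploit $\prim(G)\geq 2$ directly. The real-valued class functions of $G$ that vanish on every element of prime power order form a lattice whose rank equals $\prim(G)$, and vanishing of a virtual character off the prime-power-order elements is precisely the Atiyah--Bott--Milnor tangential condition that two real $G$-modules $U,V$ must meet in order to have any chance of being Smith equivalent. Since $\prim(G)\geq 2$, this lattice has rank at least two, leaving enough room to choose a difference $U-V$ with $U\not\cong V$ yet $\Res^G_P U\cong\Res^G_P V$ for every subgroup $P$ of prime power order. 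I would build such a pair concretely by inducing a Smith-equivalent pair up from a suitable subgroup---this is where the induction technique advertised in the abstract enters---so that the character difference is supported on the two mixed-order real conjugacy classes detected by $\prim$.

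Next I would enforce the gap hypothesis. Adding a common large real $G$-module $W$ to both $U$ and $V$ leaves the difference $U-V$, the non-isomorphism, and the primary matching of restrictions unchanged, while turning $U\oplus W$ and $V\oplus W$ into gap modules, i.e.\ $\dim(U\oplus W)^P>2\dim(U\oplus W)^H$ along the relevant chains $P\subsetneq H$. This is the standard device that renders equivariant surgery below the middle dimension unobstructed. I would then invoke the Oliver hypothesis geometrically: by Oliver's theory each gap module is realised as the tangent representation at the single fixed point of a smooth $G$-action on a disk whose boundary action can be standardised, and gluing the disk realising $U\oplus W$ to the disk realising $V\oplus W$ along their matched boundaries yields a homotopy sphere $\Sigma$ with $\Sigma^G=\{x,y\}$, $T_x\Sigma\cong U\oplus W$ and $T_y\Sigma\cong V\oplus W$. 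Finally, to secure the Laitinen Condition I would tune $W$ on the relevant isotropy so that for every $g\in G$ of order $2^k$ with $k\geq 3$ the fixed subspaces $(U\oplus W)^g$ and $(V\oplus W)^g$ have matching positive dimension, forcing $\Sigma^g$ to be a single connected homology sphere rather than a disconnected fixed set.

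The main obstacle I anticipate is exactly the geometric realisation for a \emph{general} Oliver group. The passage from the algebraic difference $U-V$ to an honest difference of tangent representations lands in a Smith set $\Sm(G)$, or its primary refinement $\PSm(G)$, whose image is constrained by surgery-theoretic obstructions: Oliver's projective-class-group and Whitehead-group invariants together with the equivariant normal-map obstructions. These vanish for favourable families but need not vanish in general, and, worse, the gap hypothesis required to run surgery can be incompatible with the primary matching dictated by $\prim$ once the subgroup lattice of $G$ is intricate. Reconciling the $2$-local connectivity demanded by the Laitinen Condition with the gap hypothesis and the vanishing of these obstructions simultaneously, and doing so uniformly over all Oliver groups with $\prim(G)\geq 2$, is the crux; this is why one expects to make progress family by family, supplying explicit modules and verifying the obstructions by hand on each, rather than to dispatch the universally quantified statement in a single stroke.
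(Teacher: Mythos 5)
You were asked to prove a statement that is a \emph{conjecture}, not a theorem of the paper, and the paper contains no proof of it --- deliberately so, because the statement is false in the generality in which it is quantified. The paper itself records this: Conjecture \ref{conjecture:laitinen} fails for $G=\Aut(A_6)$ \cite{Morimoto2008}, where any two Smith equivalent modules are isomorphic, and for $G=S_3\times A_4$, with further counterexamples in \cite{PawalowskiSumi2009}; all of these are Oliver groups with $\prim(G)\geq 2$. Consequently no proof attempt can succeed, and the genuine gap in your proposal is exactly the step you yourself flag as the crux: the geometric realisation. Your gluing argument (``each gap module is realised as the tangent representation at the single fixed point of a smooth $G$-action on a disk \ldots gluing yields a homotopy sphere'') silently assumes that the surgery-theoretic obstructions to deleting and inserting fixed points vanish and that the gap hypothesis can be arranged compatibly with $\mathcal{P}$-matching; for $\Aut(A_6)$ this is not a technical inconvenience but a provable impossibility. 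Your closing paragraph, which concedes that one must proceed family by family, is therefore the mathematically correct conclusion --- but it also means your text is a programme rather than a proof, and could not have been completed.

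It is worth noting that the machinery you outline is essentially the template the paper actually runs for its specific family $G_{p,q}$ in proving Theorem \ref{theorem:main}: produce a nonzero element $U-V\in\PO^{\mathcal{L}}_{\rm w}(H)$ in a proper subgroup $H$ (here $H=N_{pq^2}$, via the non-real characters $\psi_{s,t}$, Lemma \ref{lemma:noznzeroPO}); check that $\Ind^G_H$ is injective on $\PO$, for which the paper proves a clean fusion criterion --- $(h)^{\pm}_G\cap H=(h)^{\pm}_H$ for every $h\in H$ not of prime power order (Lemma \ref{lemma:indMonoGeneral}, Corollary \ref{corollary:inductionNpq2}); and then invoke Morimoto's deleting--inserting theorem \cite[Theorem 1.9]{Morimoto2012} (Theorem \ref{theorem:mor}) to obtain Smith equivalence after stabilising both induced modules by a common $W$, which is precisely your ``add a common large module'' device made rigorous, with the weak gap condition built into the definition of $\PO^{\mathcal{L}}_{\rm w}$. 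Two local corrections to your sketch: the Laitinen Condition is not something one must ``tune $W$'' to achieve for $G_{p,q}$ --- it holds vacuously because these groups contain no elements of order $8$; and the identification of the Atiyah--Bott tangential constraint with vanishing on \emph{all} prime-power-order elements is only valid under the Laitinen Condition, which is why $\mathcal{P}$-matching appears as a separate hypothesis in Problem \ref{question:pawalowski} rather than as an automatic consequence of Smith equivalence.
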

The converse conclusion is always true \cite{LaitinenPawalowski1999} and Conjecture \ref{conjecture:laitinen} is known to be true in the following cases, \cite{Pawalowski2018}.
\begin{enumerate}[(1)]
    \item $G$ is of odd order (and thus, by the Feit-Thompson Theorem, $G$ is solvable).
    \item $G$ has a cyclic quotient of odd composite order (for example, $G$ is a nilpotent group with three or more noncyclic Sylow subgroups).
    \item $G$ is a nonsolvable group not isomorphic to $\Aut(A_6)$ (in the case where $G = {\rm Aut}(A_6)$, the Laitinen Conjecture is false by \cite{Morimoto2008}).
    \item $G$ satisfies the Sumi $G^{\rm nil}$-condition (the condition is defined below).
\end{enumerate}
For a prime $p$, let us use the notation $\mathcal{O}^p(G)$ for the smallest normal subgroup of $G$ with $G/\mathcal{O}^p(G)$ a $p$-group. A subgroup $H$ of a group $G$ is called \emph{large} if $\mathcal{O}^p(G)\leq H$ for some prime $p$. We denote by $\mathcal{L}(G)$ the family of all large subgroups of $G$. Let us call $G$ a \emph{gap group} if there exists an $\mathbb{R}G$-module $V$ such that for any $P<H\leq G$ with $P$ of prime power order, we have $\dim V^P>2\dim V^H$ and for any $L\in\mathcal{L}(G)$, $\dim V^L=0$ holds. Denote by $G^{\rm nil}$ the smallest normal subgroup of $G$ such that $G/G^{\rm nil}$ is nilpotent. We say that $G$ satisfies the \emph{Sumi $G^{\rm nil}$-condition} if there exist two elements $a,b\in G$ of composite order which are not real conjugate in $G$, the equality $aG^{\rm nil}=bG^{\rm nil}$ holds and at least one of the following statements holds.
\begin{itemize}
    \item $|a|$ and $|b|$ are even and the involutions of the cyclic subgroups $\langle a\rangle$ and $\langle b\rangle$ are conjugate in $G$.
    \item $a$ and $b$ belong to the same gap subgroup of $G$.
\end{itemize}

Therefore, in checking the Laitinen Conjecture, we shall focus on finite solvable Oliver groups $G$ of even order, such that each cyclic quotient of $G$ is either of even or of prime power order and $G$ does not satisfy the Sumi $G^{\rm nil}$-condition.  We refer to such a group $G$ as a \emph{special Oliver group}. In general, however, Conjecture \ref{conjecture:laitinen} is not true. It fails for example for $G=\Aut(A_6)$ \cite{Morimoto2008} or $G=S_3\times A_4$ (see \cite{PawalowskiSumi2009} for more counterexamples). 

We say that two $\mathbb{R}G$-modules $U$ and $V$ are \emph{$\mathcal{P}$-matched} if for any subgroup $P\leq G$ of prime power order, the restrictions $\Res^G_P(U)$ and $\Res^G_P(V)$ are isomorphic as $P$-modules. 

In $2018$, Pawałowski \cite{Pawalowski2018} proposed the following problem.
\begin{problem}\label{question:pawalowski}
For which special Oliver groups $G$ with $\prim(G)\geq 2$, there exist $\mathcal{P}$-matched and Smith equivalent $\mathbb{R}G$-modules $U$ and $V$ which are not isomorphic?
\end{problem}
Some examples of special Oliver groups $G$ with $\prim(G)\geq 2$ such that no $\mathbb{R}G$-modules in question exist were already given in \cite{PawalowskiSumi2009}. We present here, for the first time, a certain infinite family of special Oliver groups with primary numbers at least $2$, possessing pairs of $\mathcal{P}$-matched Smith equivalent $\mathbb{R}G$-modules which are not isomorphic. 

Suppose $p$ and $q$ are odd prime numbers such that $q|(p-1)$. Let $D_{2pq}$ be the dihedral group of order $2pq$ and $C_q$ be the cyclic group of order $q$. These groups have the following presentations.
$$
\begin{tabular}{ccc}
$
D_{2pq}=\langle a,b|a^{pq}=b^2=1,bab=a^{-1}\rangle
$&
and&
$
C_q=\langle c|c^q=1\rangle.
$
\end{tabular}
$$
Let $v$ be a primitive root modulo $p$ which is not divisible by $q$ (in case $q|v$, just take $p+v$ instead of $v$ which is also a primitive root modulo $p$). Put $i=v^{(p-1)(q-1)/q}\Mod{pq}$ and note that $i\equiv 1\Mod{q}$ and the order of $i$ modulo $p$ is $q$. Therefore $i\not\equiv 1\Mod{pq}$ and $i^q\equiv 1\Mod{pq}$ by the Chinese Reminder Theorem. Consider the automorphism $\tau$ of $D_{2pq}$ given by $\tau(a)=a^i$ and $\tau(b)=b$. The order of $\tau$ is $q$. Thus, we have a homomorphism $\varphi\colon C_q\rightarrow\Aut(D_{2pq})$, $c\mapsto\tau$. Define $G_{p,q}$ as the following semidirect product.
$$
G_{p,q}=D_{2pq}\rtimes_{\varphi}C_q
$$
The main theorem of the article can be stated as follows.
\begin{theorem}\label{theorem:main}
For any odd primes $p$ and $q$ with $q|(p-1)$, $G_{p,q}$ is a special Oliver group with $\prim(G_{p,q})\geq 2$. Moreover, there exist non-isomorphic $\mathcal{P}$-matched Smith equivalent $\mathbb{R}G_{p,q}$-modules $U$ and $V$.
\end{theorem}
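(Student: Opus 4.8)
The plan is to separate the purely group-theoretic claims (that $G_{p,q}$ is a special Oliver group with $\prim(G_{p,q})\ge 2$) from the realization of the modules, where the induction technique does the work.

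I would first record that $G:=G_{p,q}$ has order $2pq^2$, is solvable (an extension of the metabelian $D_{2pq}$ by $C_q$), and has even order. The computation that governs everything is $G'=\langle a\rangle\cong C_{pq}$: since $bab^{-1}=a^{-1}$ and $cac^{-1}=a^i$ give $[a,b]=a^{-2}$ and $[a,c]=a^{i-1}$, and $\gcd(2,pq)=1$, these generate $\langle a\rangle$, whence $G^{\mathrm{ab}}\cong C_2\times C_q\cong C_{2q}$. Consequently every cyclic quotient of $G$ has order dividing $2q$, so lies in $\{1,2,q,2q\}$, each even or a prime power. For the Oliver property, any normal $H$ with $G/H$ of prime-power order must have index in $\{1,2,q\}$ (indices $p$ and $q^2$ are impossible, as $C_{2q}$ has no such quotient), i.e.\ $H\in\{G,\langle a,c\rangle,D_{2pq}\}$; requiring $H/P$ cyclic forces $P\supseteq H'$, where $H'$ equals $\langle a\rangle$, $\langle a^q\rangle\cong C_p$, $\langle a\rangle$ respectively, and in no case is there a prime-power $P\supseteq H'$ making $H/P$ cyclic, so no forbidden series exists and $G$ is Oliver. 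Finally, the elements $a^s$ with $\gcd(s,pq)=1$ have order $pq$ and their real classes are the free orbits of $\langle i,-1\rangle\le(\mathbb Z/pq)^\times$, of size $2q$; writing $p-1=mq$ and noting $m\ge 2$ (as $q$ is odd), their number $(p-1)(q-1)/(2q)=m(q-1)/2\ge 2$, so $\prim(G_{p,q})\ge 2$.

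The subtle clause is that $G$ does not satisfy the Sumi $G^{\mathrm{nil}}$-condition. I would first compute $G^{\mathrm{nil}}=\langle a\rangle$ (the quotient by $\langle a\rangle$ is abelian, while modding out any proper subgroup of $\langle a\rangle$ leaves a non-nilpotent dihedral section). A short order computation then shows the only composite-order elements are the $a^s$ of order $pq$ and the $a^sbc^t$ with $t\ne 0$; for fixed $t\ne 0$ the relation $a^r(a^sbc^t)a^{-r}=a^{s+r(1+i^t)}bc^t$ together with $\gcd(1+i^t,pq)=1$ shows the latter are all conjugate, while the $a^s$ have odd order. Hence any non-real-conjugate composite pair lying in one $G^{\mathrm{nil}}$-coset must consist of two order-$pq$ elements $a^{s_1},a^{s_2}$; their odd order makes the first alternative of the Sumi condition vacuous, so it remains to exclude the second, namely that they lie in a common gap subgroup. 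As each order-$pq$ element generates $\langle a\rangle$, the only candidate subgroups are $\langle a\rangle,D_{2pq},\langle a,c\rangle,G$, and I would show none is a gap group. For $\langle a\rangle$ and $\langle a,c\rangle$ this is immediate: each has the prime-power large subgroup $C_p=\langle a^q\rangle$ lying below $\langle a\rangle$, forcing the impossible $0=\dim W^{C_p}>2\dim W^{\langle a\rangle}\ge 0$ for a hypothetical gap module $W$. For $D_{2pq}$ and $G$ I would use a trace identity; in the decisive case $G$, any $\mathbb R G$-module $W$ with the large-subgroup conditions $W^G=0$ and $W^{\langle a,c\rangle}=0$ satisfies
\[
\sum_{j=0}^{q-1}\chi_W(bc^j)=\frac{1}{pq}\sum_{g\in G\setminus\langle a,c\rangle}\chi_W(g)=\frac{1}{pq}\bigl(|G|\dim W^G-|\langle a,c\rangle|\dim W^{\langle a,c\rangle}\bigr)=0,
\]
because $\{a^sbc^j\}$ exhausts the nontrivial coset of $\langle a,c\rangle$ and $a^sbc^j$ is conjugate to $bc^j$. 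This gives $\tr(b\mid W^{\langle c\rangle})=0$, i.e.\ $\dim W^{\langle c\rangle}=2\dim W^{\langle b,c\rangle}$, contradicting the gap inequality $\dim W^{\langle c\rangle}>2\dim W^{\langle b,c\rangle}$ required for the prime-power subgroup $\langle c\rangle<\langle b,c\rangle$ (the case $D_{2pq}$ is analogous, using $1<\langle b\rangle$). Thus $G$ is not a gap group, the second alternative fails, and $G$ is a special Oliver group.

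For the existence of the modules I would exploit the two real classes of order-$pq$ elements found above. Using induction from $\langle a\rangle$ (or from $D_{2pq}$), I would form two $\mathbb R G$-modules $U,V$ whose virtual difference $\chi_U-\chi_V$ is supported on composite-order elements and separates the classes of $a^{s_1}$ and $a^{s_2}$. Since these have order $pq$, $\chi_U-\chi_V$ vanishes on every element of prime-power order, so $\Res^G_P U\cong\Res^G_P V$ for each prime-power $P$, i.e.\ $U$ and $V$ are $\mathcal P$-matched, while $\chi_U\ne\chi_V$ makes them non-isomorphic. After arranging $U^G=V^G=0$ and the fixed-point inequalities at prime-power subgroups, I would invoke a realization theorem for $\mathcal P$-matched modules over Oliver groups (in the spirit of the work of Laitinen, Pawałowski and Solomon) to obtain a smooth action on a homotopy sphere with exactly two fixed points and tangent representations $U,V$; the Laitinen condition holds vacuously, since the Sylow $2$-subgroup of $G$ is $C_2$ and $G$ has no element of order $2^k$ with $k\ge 3$. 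The main obstacle is precisely this construction-and-realization step: because $G_{p,q}$ is not a gap group, one cannot pad dimensions with a gap module, so the induced modules must be engineered to satisfy all prime-power fixed-point conditions of the realization theorem simultaneously with being $\mathcal P$-matched and non-isomorphic. Computing these dimensions through Frobenius reciprocity for the induced characters, and reconciling them across $\langle a\rangle$, $D_{2pq}$, $\langle a,c\rangle$ and the Sylow $q$-subgroup $\langle a^p,c\rangle$, is where the genuine difficulty lies.
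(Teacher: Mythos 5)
Your group-theoretic half is essentially correct and in places more self-contained than the paper's: the paper determines all normal subgroups of $G_{p,q}$ by an exhaustive conjugacy-class count and quotes three separate results of Sumi to rule out the gap property for $N_{pq}^1$, $N_{2pq}$, $N_{pq^2}$ and $G_{p,q}$, whereas your commutator computation $G'=\langle a\rangle$ and the averaging identity $\sum_j\chi_W(bc^j)=0$ (forcing $\dim W^{\langle c\rangle}=2\dim W^{\langle b,c\rangle}$ against the required strict inequality) give direct proofs. Your count of real classes of order-$pq$ elements, $(p-1)(q-1)/(2q)\ge 2$, matches the paper's Table of classes $A_l$ and suffices for $\prim(G_{p,q})\ge 2$.

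The second half, however, has a genuine gap, and you have named it yourself: you never actually produce the Smith equivalence. Saying that one would ``invoke a realization theorem for $\mathcal P$-matched modules over Oliver groups'' after ``arranging the fixed-point inequalities at prime-power subgroups'' is exactly the step the theorem is about, and your own observation that $G_{p,q}$ is not a gap group shows why the standard realization results do not apply to modules constructed directly on $G$. The missing idea is Morimoto's deleting--inserting theorem under the \emph{weak} gap condition (Theorem 1.9 of \cite{Morimoto2012}): if $H\le G$ with $G$ Oliver and $U-V\in\PO^{\mathcal L}_{\rm w}(H)$ --- i.e.\ $U,V$ are $\mathcal P$-matched, satisfy the weak gap condition, and have zero fixed sets for the large subgroups \emph{of $H$} --- then $\Ind^G_H(U)\oplus W$ and $\Ind^G_H(V)\oplus W$ are Smith equivalent for some $\mathbb RG$-module $W$. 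This shifts all the fixed-point-dimension requirements from $G$ down to a subgroup $H$ where they can be checked by hand; the paper takes $H=N_{pq^2}\cong C_q\times F_{p,q}$ and exhibits $U=2\Rea\psi_{s,t}$, $V=2\Rea\psi_{q-s,t}$ explicitly. Your proposed induction subgroups $\langle a\rangle$ and $D_{2pq}$ are also problematic for the non-isomorphism claim: for $h=a^l$ of order $pq$ one has $(h)^{\pm}_{G}\cap\langle a\rangle$ of size $2q$ while $(h)^{\pm}_{\langle a\rangle}=\{h,h^{-1}\}$, so by the paper's criterion (Lemma \ref{lemma:indMonoGeneral}) induction from $\langle a\rangle$ is \emph{not} injective on $\PO$, and you would have to verify separately that your chosen difference survives; for $H=N_{pq^2}$ the criterion is satisfied and injectivity is automatic (Corollary \ref{corollary:inductionNpq2}). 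Without Morimoto's theorem (or an equivalent surgery-theoretic input) the final sentence of the statement remains unproved.
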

\begin{remark}\emph{Note that the theorem above confirms the Laitinen Conjecture for $G_{p,q}$'s since the Laitinen Condition is naturally satisfied due to the lack of elements of order divisible by $8$ in $G_{p,q}$'s.}
\end{remark}
\begin{remark}
\emph{In the case where $q=2$, $N_p=\{(a^{qs},1)|s=0,...,p-1\}$ is a normal subgroup of $G_{p,q}$ isomorphic to the cyclic group of order $p$, such that the quotient $G_{p,q}/N_p$ is a $2$-group. Thus, for $q = 2$, $G_{p,q}$ is not an Oliver group. Moreover, any nontrivial element of $G_{p,q}$ is of order 2, 4, or $p$, where $p$ is an odd prime. Therefore, by elementary character theory arguments and the result of Atiyah and Bott [1, Thm. 7.15], any two Smith equivalent $\mathbb{R}G_{p,q}$-modules are isomorphic.}
\end{remark}
Fix odd primes $p$ and $q$ such that $q|(p-1)$. For the better presentation of the material, let us introduce additionally the following symbols and concepts ($G$ denotes a finite group).
\begin{itemize}
    \item $\RO(G)$ - the real representation group of $G$. Consists of formal differences $U-V$ of $\mathbb{R}G$-modules. We identify $U-V$ with $U'-V'$ if there exists an $\mathbb{R}G$-module $W$ such that $U\oplus V'\oplus W\cong U'\oplus V\oplus W$. The addition is induced by direct sum operation.
    \item $\PO(G)$ - the subgroup of $\RO(G)$ containing elements $U-V$ such that $U$ and $V$ are $\mathcal{P}$-matched.
    \item An $\mathbb{R}G$-module $V$ is said to satisfy the \emph{weak gap condition} if for any $P<H\leq G$ such that $P$ is of prime power order, we have $\dim V^P\geq 2\dim V^H$.
    \item $\PO^{\mathcal{L}}_{\rm w}(G)$ - the subgroup of $\PO(G)$ containing elements which can be written as $U-V$ for some $\mathbb{R}G$-modules $U$ and $V$ satisfying the weak gap condition and such that $\dim W^L=0$ for any $L\in\mathcal{L}(G)$ and $W=U,V$.
    \item $N_{pq^2}$ - the unique subgroup of $G_{p,q}$ of index $2$.
    \item $\Ind^G_H\colon\PO(H)\rightarrow\PO(G)$ - the induction homomorphism defined for any subgroup $H\leq G$ by the formula $U-V\mapsto\Ind^G_H(U)-\Ind^G_H(V)$, where $\Ind^G_H(W)$ denotes the induced $\mathbb{R}G$-module from the $\mathbb{R}H$-module $W$. This is a well-defined map since, if $U$ and $V$ are $\mathcal{P}$-matched $\mathbb{R}H$-modules, then so are $\Ind^G_H(U)-\Ind^G_H(V)$ as $\mathbb{R}G$-modules (we comment on this fact in the subsequent part).
\end{itemize}The paper is organized as follows. First, we show that $\PO^{\mathcal{L}}_{\rm w}(N_{pq^2})\neq 0$. In the next section, we prove that $G_{p,q}$ is a special Oliver group with $\prim(G_{p,q})\geq 2$. The third section provides, for any finite groups $H\leq G$, the necessary and sufficient condition for $\Ind^G_H\colon\PO(H)\rightarrow\PO(G)$ to be a monomorphism. Finally, we prove Theorem \ref{theorem:main} using the properties of the induction from $N_{pq^2}$ to $G_{p,q}$. 

From now on, all groups considered in this paper are assumed to be finite.
\section{$\PO^{\mathcal{L}}_{\rm w}(N_{pq^2})$ is nonzero}
Note that $N_{pq^2}=\{(a^l,c^m)|k=0,...,pq-1, m=0,...,q-1\}$. We have
$$
(1,c)(a,1)(1,c)^{-1}=(1,c)(a,1)(1,c^{-m})=(1,c)(a,c^{-m})=(a^i,1).
$$
Thus, under the identifications $a\leftrightarrow(a,1)$ and $c\leftrightarrow(1,c)$, $N_{pq^2}$ can be presented as $$N_{pq^2}=\langle a,c|a^{pq}=c^q=1,cac^{-1}=a^i\rangle.$$ 
Let
$$
N_{pq^2}'=\langle \alpha,\beta,\gamma|\alpha^{q}=\beta^p=\gamma^q=1,\gamma\beta\gamma^{-1}=\beta^i,\alpha\beta=\beta\alpha,\alpha\gamma=\gamma\alpha\rangle.
$$
Then $N'_{pq^2}$ is isomorphic to the direct product of $C_q=\langle\alpha\rangle$ with the Frobenius group $F_{p,q}$ generated by $\beta$ and $\gamma$. 
\begin{lemma}\label{lemma:isom}
Let $f\colon N'_{pq^2}\rightarrow N_{pq^2}$ be given by $f(\alpha)=a^p$, $f(\beta)=a^q$ and $f(\gamma)=c$. Then $f$ is a group isomorphism.
\end{lemma}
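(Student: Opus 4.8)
The plan is to exploit the universal property of a group presentation: to show $f$ is a well-defined homomorphism, I would verify that the proposed images $f(\alpha)=a^p$, $f(\beta)=a^q$ and $f(\gamma)=c$ satisfy every defining relation of $N'_{pq^2}$, after which von Dyck's theorem guarantees that $f$ extends to a homomorphism. The order relations are immediate: since $a$ has order $pq$, the element $a^p$ has order $q$ and $a^q$ has order $p$, while $c$ has order $q$, so $f(\alpha)^q=f(\beta)^p=f(\gamma)^q=1$. The relation $\alpha\beta=\beta\alpha$ holds because $a^p$ and $a^q$ are both powers of the single generator $a$ and hence commute. The two remaining relations are the substantive ones, and this is exactly where the arithmetic of $i$ enters.

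For $\alpha\gamma=\gamma\alpha$ I would compute $c\,a^p\,c^{-1}=(cac^{-1})^p=a^{ip}$, so that $a^p$ and $c$ commute precisely when $ip\equiv p\Mod{pq}$, i.e.\ when $q\mid(i-1)$; this is exactly the hypothesis $i\equiv 1\Mod{q}$ recorded in the construction of $N_{pq^2}$. For $\gamma\beta\gamma^{-1}=\beta^i$ I would compute $c\,a^q\,c^{-1}=(cac^{-1})^q=a^{iq}=(a^q)^i$, which is $f(\beta)^i$, so that relation follows automatically from $cac^{-1}=a^i$. Having checked all defining relations, $f$ is a homomorphism.

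It then remains to prove bijectivity, for which I would establish surjectivity directly. Since $p$ and $q$ are distinct primes we have $\gcd(p,q)=1$, so there are integers $u,v$ with $up+vq=1$, whence $a=(a^p)^u(a^q)^v=f(\alpha)^u f(\beta)^v$ lies in the image of $f$, as does $c=f(\gamma)$. Because $a$ and $c$ generate $N_{pq^2}$, the map $f$ is onto. Finally, both groups have order $pq^2$: for $N_{pq^2}$ this follows from its being the index-$2$ subgroup of the order-$2pq^2$ group $G_{p,q}$, and for $N'_{pq^2}$ it follows from the already-noted decomposition $N'_{pq^2}\cong C_q\times F_{p,q}$ of order $q\cdot pq$. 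A surjection between finite groups of equal order is a bijection, so $f$ is an isomorphism.

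I would expect no serious obstacle in this argument; it is a routine verification. The only point requiring genuine care is checking that the images of the generators respect the relations, and in particular recognizing that the commutativity relation $\alpha\gamma=\gamma\alpha$ is precisely what forces, and is forced by, the congruence $i\equiv 1\Mod{q}$, while the coprimality of $p$ and $q$ is what makes the surjectivity argument work.
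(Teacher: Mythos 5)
Your proof is correct and follows essentially the same route as the paper's: the identical relation checks (in particular using $i\equiv 1\Mod{q}$ to get $ca^pc^{-1}=a^{ip}=a^p$) and the same B\'ezout argument for surjectivity. The only divergence is the final step, where the paper verifies injectivity by computing the kernel directly, while you count orders via the decomposition $N'_{pq^2}\cong C_q\times F_{p,q}$ asserted just before the lemma; both are valid.
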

\begin{proof}
Note that $f$ is a well-defined group homomorphism. Indeed, $f(\alpha^q)=a^{pq}=1$, $f(\beta^p)=a^{pq}=1$, $f(\gamma^q)=c^q=1$, $f(\gamma\beta\gamma^{-1})=ca^qc^{-1}=(cac^{-1})^q=a^{iq}=f(\beta^i)$, $f(\alpha\beta)=a^{p+q}=a^{q+p}=f(\beta\alpha)$, $f(\gamma\alpha\gamma^{-1})=ca^pc^{-1}=a^{pi}=a^p=f(\alpha)$. The equality $a^{pi}=a^p$ follows from the fact that $pq|p(i-1)$ since $i\equiv 1\Mod{q}$.

Take any $a^lc^m\in N_{pq^2}$. Since $p$ and $q$ are different primes, we can find $x,y\in\mathbb{Z}$ such that $1=xp+yq$ and
$$
f(\alpha^{lx}\beta^{ly}\gamma^m)=a^{plx}a^{qly}c^m=a^{l(xp+yq)}c^m=a^lc^m.
$$
Hence $f$ is surjective. Let us prove that it is injective as well. Suppose $f(\alpha^x\beta^yc^m)=1$. Then $a^{px+qy}c^m=1$ which is the case only if $pq|(px+qy)$ and $m=0$. Since $p|px$ and $q|qy$, we must have then $p|qy$ and $q|px$ but this means $p|y$ and $q|x$. As a consequence, $\alpha^x\beta^y\gamma^m=1$ and $f$ has the trivial kernel.
\end{proof}
Put $u=i\Mod{p}$, $r=(p-1)/q$ and $\mathbb{Z}_p^*/\langle u\rangle=v_1\langle u\rangle\cup...\cup v_r\langle u\rangle$. Following \cite{Liebeck2001}[25.10 Theorem] the conjugacy classes of $F_{p,q}$ are as follows.
\begin{table}[H]
$$
\begin{tabular}{c|ccc}
     class&$(1)$&$(\beta^{v_j})$&$(\gamma^n)$  \\
     \hline
     representative order&$1$&$p$&$q$\\
     \hline
     size&$1$&$q$&$p$\\
     \hline
     \# of classes of a given type&$1$&$r$&$q-1$
\end{tabular}
$$
\caption{\label{table:conjugacyClassesFrobenius}Conjugacy classes of $F_{p,q}$.}
\end{table}
where $(\beta^{v_j})=\{\beta^{v_js}|s\in\langle u\rangle\}$ and $(b^n)=\{\beta^m\gamma^n|0\leq m\leq p-1\}$ for all $1\leq j\leq r$ and $1\leq n\leq q-1$. Let $\sigma_{t,x}=\sum_{s\in\langle u\rangle}e^{2\pi iv_txs/p}$ for $x=0,...,p-1$. We have $r$ nonlinear irreducible characters of $F_{p,q}$ given by $\chi_t(\beta^x)=\sigma_{t,x}$ and $\chi_t(\gamma^n)=0$ for $x=0,...,p-1$ and $n=1,...,q-1$. They are presented in the table below.
\begin{table}[H]
$$
\begin{tabular}{c|ccc}
     &$(1)$&$(\beta^{v_j})$&$(\gamma^n)$  \\
     \hline
     $\chi_1$&$q$&$\sigma_{1,v_j}$&$0$\\
     \hline
     $\vdots$&$\vdots$&$\vdots$&$\vdots$\\
     \hline
     $\chi_r$&$q$&$\sigma_{r,v_j}$&$0$
\end{tabular}
$$
\caption{\label{table:charactersFrobenius}Nonlinear irreducible characters of $F_{p,q}$.}
\end{table}
The irreducible characters of $C_q$ are $\rho_s\colon C_q\rightarrow\mathbb{C}$, $\alpha\mapsto\zeta_q^s$ for $s=0,...,q-1$, where $\zeta_q=e^{2\pi i/q}$. Since the irreducible characters of direct products are products of irreducible characters of the factor groups \cite{Liebeck2001}[19.18 Theorem], the following table contains the nonlinear irreducible characters of $N_{pq^2}\cong C_q\times F_{p,q}$.
\begin{table}[H]
$$
\begin{tabular}{c|ccccccc}
     $g$&$(1,1)$&$(1,\beta^{v_j})$&$(1,\gamma^{n})$&$(\alpha^l,\beta^{v_j})$&$(\alpha^l,\gamma^n)$&$(\alpha^l,1)$&  \\
     \hline
     $|g|$&$1$&$p$&$q$&$pq$&$q$&$q$\\
     $|(g)|$&$1$&$q$&$p$&$q$&$p$&$1$\\
     \# $(g)$&$1$&$r$&$q-1$&$(q-1)r$&$(q-1)^2$&$q-1$\\
     \hline
     $\psi_{s,t}=\rho_s\times\chi_t$&$q$&$\sigma_{t,v_j}$&$0$&$\zeta_q^{ls}\sigma_{t,v_j}$&$0$&$q\zeta_q^{ls}$
\end{tabular}
$$
\caption{\label{table:charactersNpq2}Nonlinear irreducible characters of $N_{pq^2}$.}
\end{table}
Let $N_p=\{(1,\beta^s)|s=0,...,p-1\}$. Obviously, $N_p$ is a normal subgroup of $N_{pq^2}$ isomorphic to $C_p$.
\begin{lemma}\label{lemma:largeNpq2}
$\mathcal{O}^q(N_{pq^2})=N_p$ and $\mathcal{O}^p(N_{pq^2})=N_{pq^2}$. As a result, all $L\in\mathcal{L}(N_{pq^2})$ contain $N_p$ as a subgroup.
\end{lemma}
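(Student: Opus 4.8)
The plan is to use the standard characterisation of $\mathcal{O}^\ell(G)$ as the smallest normal subgroup whose quotient is an $\ell$-group (equivalently, the subgroup generated by all elements of $\ell'$-order), together with the concrete structure $N_{pq^2}\cong C_q\times F_{p,q}$ and the fact that $N_p=\langle\beta\rangle$ is the Frobenius kernel, a normal cyclic Sylow $p$-subgroup of order $p$.

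First I would establish $\mathcal{O}^q(N_{pq^2})=N_p$. Since $|N_{pq^2}|=pq^2$, the quotient $N_{pq^2}/N_p$ has order $q^2$ and is therefore a $q$-group, giving $\mathcal{O}^q(N_{pq^2})\leq N_p$. For the reverse inclusion, observe that $\mathcal{O}^q(N_{pq^2})$ is normal with $q$-group quotient, so its order is divisible by $p$; hence it contains a Sylow $p$-subgroup of $N_{pq^2}$. As $N_p$ is the unique (normal) Sylow $p$-subgroup, $N_p\leq\mathcal{O}^q(N_{pq^2})$, and equality follows.

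Next I would show $\mathcal{O}^p(N_{pq^2})=N_{pq^2}$, equivalently that $N_{pq^2}$ has no quotient of order $p$. Any such quotient would be cyclic, hence abelian, so its kernel would contain the derived subgroup. A commutator computation in $F_{p,q}$ gives $[\gamma,\beta]=\beta^{i-1}$, and since $i\not\equiv 1\Mod{p}$ this element generates $\langle\beta\rangle=N_p$; as $\langle\alpha\rangle=C_q$ is a central direct factor, $[N_{pq^2},N_{pq^2}]=N_p$, of order $p$. A normal subgroup of index $p$ would have order $q^2$ and yet contain $N_p$, which is impossible because $p\nmid q^2$. Hence no quotient of order $p$ exists. (Alternatively, one writes $\beta^{1-i}=(\beta\gamma\beta^{-1})\gamma^{-1}$ as a product of two elements of order $q$, so $N_p\leq\mathcal{O}^p(N_{pq^2})$, and adjoining the further $q$-elements $\gamma,\alpha$ recovers the whole group.)

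Finally, the assertion about large subgroups follows formally: for a prime $\ell$ the subgroup $\mathcal{O}^\ell(N_{pq^2})$ equals $N_p$ when $\ell=q$, equals $N_{pq^2}$ when $\ell=p$, and equals $N_{pq^2}$ whenever $\ell\nmid pq^2$; in every case it contains $N_p$. Thus any $L\in\mathcal{L}(N_{pq^2})$, which by definition satisfies $\mathcal{O}^\ell(N_{pq^2})\leq L$ for some prime $\ell$, must contain $N_p$. I expect the main obstacle to be the second claim, namely verifying that the order-$p$ part cannot be detached as a quotient; this reduces precisely to the commutator identity $[\gamma,\beta]=\beta^{i-1}$ combined with $i\not\equiv 1\Mod{p}$.
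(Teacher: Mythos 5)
Your proposal is correct, but the key step is argued by a genuinely different route than the paper's. For $\mathcal{O}^p(N_{pq^2})=N_{pq^2}$ the paper shows that $N_{pq^2}$ has no normal subgroup of order $q^2$ by a counting argument over the conjugacy classes of $N_{pq^2}\cong C_q\times F_{p,q}$ (from Table \ref{table:charactersNpq2}): a normal subgroup of order $q^2$ would have to contain whole classes of elements of order $q$, and the classes of type $((1,\gamma^n))$ or $((\alpha^l,\gamma^n))$ are too large (size $p$, giving more than $q^2$ elements), while the remaining classes $((\alpha^l,1))$ supply only $q-1$ elements. You instead compute the derived subgroup: $[\gamma,\beta]=\beta^{i-1}$ with $i\not\equiv 1\Mod{p}$ forces $[N_{pq^2},N_{pq^2}]=N_p$, and since an index-$p$ normal subgroup would be of order $q^2$ yet contain $N_p$, none exists. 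Both arguments in fact establish the same intermediate fact (no normal subgroup of order $q^2$), which the paper reuses later in Lemmas \ref{lemma:ordersNormalSubgroups} and \ref{lemma:normalNorCyclicQuotients}; your commutator computation is shorter and independent of the character/class table, whereas the paper's counting argument is part of the explicit class analysis it needs anyway. Your Sylow argument for $\mathcal{O}^q(N_{pq^2})=N_p$ spells out what the paper dismisses as obvious, and your final reduction for $\mathcal{L}(N_{pq^2})$ matches the paper's. No gaps.
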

\begin{proof}
It is obvious that $\mathcal{O}^q(N_{pq^2})=N_p$. We show that there is no normal subgroup of $N_{pq^2}$ of order $q^2$ which would conclude the proof.

Suppose for the converse that $N$ is a normal subgroup of $N_{pq^2}$ of order $q^2$. There exists $g\in N$ of order $q$. Since $N\trianglelefteq N_{pq^2}$, we have $(g)\subseteq N$. We know by Table \ref{table:charactersNpq2} that $g$ belongs to one of the following conjugacy classes: $((1,\gamma^n))$, $((\alpha^l,\gamma^n))$ or $((\alpha^l,1))$. Suppose $g\in ((1,\gamma^{n_0}))$ for some $n_0\in\{1,...,q-1\}$. Since $\{(1,\gamma^n)|n=0,...,q-1\}=\langle(1,\gamma^{n_0})\rangle\leq N$, it follows that each class $((1,\gamma^n))$ is contained in $N$. This yields at least $p(q-1)>q^2$ elements in $N$. A contradiction. Let $g\in((\alpha^{l_0},\gamma^{n_0}))$ for some $l_0,n_0\in\{1,...,q-1\}$. Then, similarly as before, considering $\langle(\alpha^{l_0},\gamma^{n_0})\rangle\leq N$ yields at least $p(q-1)>q^2$ elements in $N$ and we can exclude this case as well. Thus, all elements of order $q$ of $N$ belong to one of the classes $((\alpha^l,1))$. From Table \ref{table:charactersNpq2} follows that there are $q-1$ elements in these classes. Moreover, every element of $N$ different from the identity is of order $q$. This yields $|N|=q$ which is also a contradiction. 
\end{proof}
Since characters of any group $G$ determine $FG$-modules up to isomorphism for $F=\mathbb{R},\mathbb{C}$, we shall use the same symbols for the characters and the $FG$-modules determined by them. Moreover, if $\chi$ is the character of $G$ determined by some $FG$-module, then by $\dim\chi^H$ we mean the fixed point dimension over $F$ for a subgroup $H$ acting on this $FG$-module. Note that in case $\chi$ is a character of some $\mathbb{R}G$-module then all such fixed point dimensions over $\mathbb{R}$ are equal as considered over $\mathbb{C}$ - we can treat $\chi$ as a character of a $\mathbb{C}G$-module as well.
\begin{lemma}\label{lemma:zeroDimension}
Let $s\neq 0$ and $H$ be a subgroup of $N_{pq^2}$ of order $p$ or $q^2$. Then $\dim\psi_{s,t}^H=0$ for any $t=0,...,r$.
\end{lemma}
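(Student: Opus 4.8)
The plan is to compute each fixed point dimension directly from the character table via the standard formula: for a $\mathbb{C}N_{pq^2}$-module with character $\chi$ and a subgroup $H$, the dimension of the fixed subspace is the multiplicity of the trivial character in $\Res^{N_{pq^2}}_H\chi$, i.e.
$$
\dim\chi^H=\frac{1}{|H|}\sum_{h\in H}\chi(h).
$$
This quantity is invariant under replacing $H$ by a conjugate subgroup, since the sum then ranges over the same union of $N_{pq^2}$-classes and $\chi$ is a class function. I would use this twice. For order $p$, note that the Sylow $p$-subgroup of the Frobenius factor $F_{p,q}$ is normal and equal to $\langle\beta\rangle$, so by Lemma~\ref{lemma:largeNpq2} the group $N_p=\{(1,\beta^x)\}$ is the \emph{only} subgroup of order $p$. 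For order $q^2$, since $|N_{pq^2}|=pq^2$, every subgroup of order $q^2$ is a Sylow $q$-subgroup, hence conjugate to $Q=\langle\alpha\rangle\times\langle\gamma\rangle=\{(\alpha^l,\gamma^n)\mid l,n=0,\dots,q-1\}$, so by conjugation-invariance it suffices to treat $Q$.

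For $H=N_p$, Table~\ref{table:charactersNpq2} gives $\psi_{s,t}(1,\beta^x)=\sigma_{t,x}$, so
$$
\dim\psi_{s,t}^{N_p}=\frac{1}{p}\sum_{x=0}^{p-1}\sigma_{t,x}=\frac{1}{p}\sum_{x=0}^{p-1}\sum_{w\in\langle u\rangle}e^{2\pi i v_t w x/p}=\frac{1}{p}\sum_{w\in\langle u\rangle}\sum_{x=0}^{p-1}e^{2\pi i v_t w x/p}.
$$
Since $v_t$ and each $w\in\langle u\rangle$ are nonzero residues modulo $p$, the product $v_tw$ is nonzero modulo $p$, so each inner geometric sum of $p$-th roots of unity vanishes; hence $\dim\psi_{s,t}^{N_p}=0$. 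Notice that this case does not use the hypothesis $s\neq0$.

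For $H=Q$, Table~\ref{table:charactersNpq2} shows $\psi_{s,t}$ vanishes on every element $(\alpha^l,\gamma^n)$ with $n\neq0$, so only the elements $(\alpha^l,1)$ contribute:
$$
\dim\psi_{s,t}^{Q}=\frac{1}{q^2}\sum_{l=0}^{q-1}\sum_{n=0}^{q-1}\psi_{s,t}(\alpha^l,\gamma^n)=\frac{1}{q^2}\sum_{l=0}^{q-1}q\,\zeta_q^{ls}=\frac{1}{q}\sum_{l=0}^{q-1}\zeta_q^{ls}.
$$
Because $s\in\{1,\dots,q-1\}$ is nonzero modulo $q$, this final sum of $q$-th roots of unity is zero, so $\dim\psi_{s,t}^{Q}=0$. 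This is exactly the step where the assumption $s\neq0$ is essential: for $s=0$ the sum equals $q$ and the fixed space would be nontrivial.

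I do not expect a genuine obstacle here; the statement reduces to two applications of the orthogonality (geometric-sum) relations once the uniqueness of $N_p$ and the Sylow/conjugacy reduction for order $q^2$ are recorded. The only point demanding care is the bookkeeping of which conjugacy classes the elements of $N_p$ and $Q$ occupy, so that the correct entries of Table~\ref{table:charactersNpq2} are summed — in particular the observation that all elements $(\alpha^l,\gamma^n)$ with $n\neq0$ lie in classes on which $\psi_{s,t}$ is identically zero, which is what collapses the $q^2$ computation to the single sum over $\langle\alpha\rangle$.
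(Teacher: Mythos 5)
Your proof is correct, and for the order-$p$ case it is essentially identical to the paper's (the author also evaluates $\frac{1}{p}\sum_{h\in N_p}\psi_{s,t}(h)$ by swapping the two sums and killing each inner geometric sum of $p$-th roots of unity). The genuine difference is in the order-$q^2$ case. You reduce to the concrete Sylow $q$-subgroup $Q=\langle\alpha\rangle\times\langle\gamma\rangle$ via Sylow conjugacy and conjugation-invariance of $\dim\chi^H$, and then compute the fixed-point dimension \emph{exactly} as $\frac{1}{q}\sum_{l=0}^{q-1}\zeta_q^{ls}=0$; the paper instead keeps $H$ arbitrary, bounds the contribution of the order-$q$ elements by the triangle inequality to get $\dim\psi_{s,t}^H<1$, and concludes $\dim\psi_{s,t}^H=0$ because a fixed-point dimension is a nonnegative integer. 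Your route buys an exact evaluation and makes transparent exactly where $s\neq0$ enters; the paper's route avoids the Sylow reduction entirely (it never needs to know which order-$q^2$ subgroup $H$ is, only that at most the $q-1$ elements $(\alpha^l,1)$ can contribute nonzero character values). Both are sound. Two minor points: the uniqueness of the order-$p$ subgroup follows from $N_p$ being a \emph{normal} Sylow $p$-subgroup rather than from Lemma~\ref{lemma:largeNpq2} as you cite it (the paper likewise asserts $H=N_p$ without comment); and your bookkeeping of which classes of Table~\ref{table:charactersNpq2} the elements of $Q$ occupy is exactly the observation the paper also relies on, so there is no gap there.
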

\begin{proof}
Suppose $|H|=p$. Then $H=N_p$ and it follows from Table \ref{table:charactersNpq2} that
\begin{eqnarray*}
\dim\psi_{s,t}^H&=&\frac{1}{|H|}\sum_{h\in H}\psi_{s,t}(h)=\frac{1}{p}\Big(q+\sum_{x=1}^{p-1}\sigma_{t,x}\Big)=\frac{1}{p}\Big(q+\sum_{x=1}^{p-1}\sum_{s\in\langle u\rangle}e^{2\pi iv_txs/p}\Big)\\
&=&\frac{1}{p}\Big(q+\sum_{s\in\langle u\rangle}\sum_{x=1}^{p-1}e^{2\pi iv_txs/p}\Big)=\frac{1}{p}\Big(q+\sum_{s\in\langle u\rangle}(-1)\Big)=0.
\end{eqnarray*}
If $|H|=q^2$, then, since the only nonzero values of $\psi_{s,t}$ on elements of order $q$ are taken for the classes $(\alpha^l,1)$, it follows that
$$
\dim\psi_{s,t}^H\leq\frac{1}{q^2}\Big(q+\sum_{l=1}^{q-1}|q\zeta_q^{ls}|\Big)<\frac{1}{q^2}(q+q(q-1))=1
$$
and $\dim\psi_{s,t}^H=0$.
\end{proof}
\begin{corollary}\label{corollary:weakGapNpq2}
If $s\neq 0$, then $2\Rea\psi_{s,t}$ is an $\mathbb{R}N_{pq^2}$-module satisfying the weak gap condition and such that $\dim(2\Rea\psi_{s,t})^L=0$ for any $L\in\mathcal{L}(N_{pq^2})$.
\end{corollary}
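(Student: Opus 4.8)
The plan is to deduce Corollary \ref{corollary:weakGapNpq2} directly from Lemma \ref{lemma:zeroDimension} together with Lemma \ref{lemma:largeNpq2}, with essentially no new computation. First I would observe that since $\psi_{s,t}$ is a complex character, $2\Rea\psi_{s,t}=\psi_{s,t}+\overline{\psi_{s,t}}$ is the character of an $\mathbb{R}N_{pq^2}$-module (the realification of the complex module $\psi_{s,t}$), so the fixed-point dimensions it computes are genuine real dimensions; and crucially, for \emph{any} subgroup $H$ one has $\dim(2\Rea\psi_{s,t})^H=2\dim\psi_{s,t}^H$, because the fixed-point dimension of the realification is twice the complex fixed-point dimension of the original module. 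This reduces every assertion about $2\Rea\psi_{s,t}$ to an assertion about $\dim\psi_{s,t}^H$, where Lemma \ref{lemma:zeroDimension} applies.

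Next I would verify the vanishing on large subgroups. By Lemma \ref{lemma:largeNpq2}, every $L\in\mathcal{L}(N_{pq^2})$ contains $N_p$, which has order $p$; hence $N_p\leq L$ and so $(2\Rea\psi_{s,t})^L\subseteq(2\Rea\psi_{s,t})^{N_p}$, giving $\dim(2\Rea\psi_{s,t})^L\leq\dim(2\Rea\psi_{s,t})^{N_p}=2\dim\psi_{s,t}^{N_p}=0$ by the $|H|=p$ case of Lemma \ref{lemma:zeroDimension}. This settles the condition $\dim W^L=0$ for all $L\in\mathcal{L}(N_{pq^2})$.

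The remaining task is the weak gap condition: for every $P<H\leq N_{pq^2}$ with $P$ of prime power order, I must show $\dim(2\Rea\psi_{s,t})^P\geq 2\dim(2\Rea\psi_{s,t})^H$. Here I would split on the order of $P$. If $P$ has order divisible by $p$, then $P$ contains the unique subgroup of order $p$, namely $N_p$, so $\dim\psi_{s,t}^P=0$ by Lemma \ref{lemma:zeroDimension}; but then the larger group $H$ also contains $N_p$, forcing $\dim\psi_{s,t}^H=0$, and the inequality $0\geq 0$ is trivial. The genuinely relevant case is when $P$ is a nontrivial $q$-group of order $q$ or $q^2$: if $|P|=q^2$ then $P$ is a subgroup of order $q^2$ and Lemma \ref{lemma:zeroDimension} again gives $\dim\psi_{s,t}^P=0$, whence both sides vanish. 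The main obstacle — and the only case requiring care — is $|P|=q$, where $\dim\psi_{s,t}^P$ need not be zero; there I would bound $\dim\psi_{s,t}^P$ below using the character values in Table \ref{table:charactersNpq2} and bound $\dim\psi_{s,t}^H$ above by enlarging $H$ to one of the subgroups of order $p$ or $q^2$ covered by Lemma \ref{lemma:zeroDimension}, using that any $H$ properly containing a $C_q$ has order divisible by $p$ or equal to $q^2$ inside $N_{pq^2}$.

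Concretely, I expect that for $|P|=q$ one computes $\dim\psi_{s,t}^P=\tfrac{1}{q}\sum_{g\in P}\psi_{s,t}(g)\geq 1$ whenever $P$ lies in a class of type $(\alpha^l,1)$ contributing the nonzero value $q\zeta_q^{ls}$, while $\dim\psi_{s,t}^H$ for any strictly larger $H$ is forced to $0$ precisely because such $H$ must contain either $N_p$ (order divisible by $p$) or a full order-$q^2$ subgroup, both handled by Lemma \ref{lemma:zeroDimension}. Doubling via the realification then yields $\dim(2\Rea\psi_{s,t})^P\geq 2\geq 2\cdot 0=2\dim(2\Rea\psi_{s,t})^H$, completing the weak gap verification. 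Thus the corollary follows by assembling the realification identity, Lemma \ref{lemma:largeNpq2}, and the two cases of Lemma \ref{lemma:zeroDimension}, with the order-$q$ subgroup case being the single point deserving explicit attention.
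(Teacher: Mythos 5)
Your overall strategy (the realification identity $\dim(2\Rea\psi_{s,t})^H=2\dim\psi_{s,t}^H$, the vanishing on large subgroups via Lemma \ref{lemma:largeNpq2} together with the $|H|=p$ case of Lemma \ref{lemma:zeroDimension}, and a case analysis for the weak gap condition) is the same as the paper's, and the large-subgroup part is correct. However, your verification of the weak gap condition has a genuine gap: you split on the order of a \emph{nontrivial} $P$, but the trivial subgroup is also of prime power order, and the pair $P=\{1\}<H$ with $|H|=q$ is precisely the only case in which the required inequality is not of the trivial form ``something nonnegative $\geq 0$''. Indeed, for every $H$ of order $p$, $q^2$, $pq$ or $pq^2$ one has $\dim\psi_{s,t}^H=0$ (orders $p$ and $q^2$ by Lemma \ref{lemma:zeroDimension}; orders $pq$ and $pq^2$ because such $H$ contains the unique Sylow $p$-subgroup $N_p$), so whenever $H$ is not of order $q$ the right-hand side $2\dim(2\Rea\psi_{s,t})^H$ vanishes and there is nothing to check. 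The one substantive inequality is $\dim(2\Rea\psi_{s,t})\geq 2\dim(2\Rea\psi_{s,t})^H$ for $|H|=q$, where $\dim(2\Rea\psi_{s,t})^H$ can equal $2$; this is exactly the estimate the paper carries out, $2q>4\geq 2\dim(2\Rea\psi_{s,t})^H$, using $\deg\psi_{s,t}=q$ and $q\geq 3$. Your proposal never performs this computation.

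A secondary inaccuracy: for $P=\langle(\alpha^l,1)\rangle$ (the order-$q$ subgroup on which $\psi_{s,t}$ takes the nonzero values $q\zeta_q^{ls}$) one has $\dim\psi_{s,t}^P=\frac{1}{q}\sum_{l=0}^{q-1}q\zeta_q^{ls}=\sum_{l=0}^{q-1}\zeta_q^{ls}=0$ precisely because $s\neq 0$, not $\geq 1$ as you assert; it is the order-$q$ subgroups generated by elements with nontrivial $\gamma$-component that give $\dim\psi_{s,t}^P=1$. This error happens to be harmless in the case you were treating ($|P|=q$, where the right-hand side is $0$ anyway), but the claimed lower bound $\dim(2\Rea\psi_{s,t})^P\geq 2$ is false and would mislead if reused elsewhere.
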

\begin{proof}
From the properties of real and complex irreducible representations, we know that $2\Rea\psi_{s,t}$ is the character of a real irreducible $N_{pq^2}$-module since $\psi_{s,t}$ is not real-valued. Take any $L\in\mathcal{L}(N_{pq^2})$. We know by Lemma \ref{lemma:largeNpq2} that $N_p\leq L$. Thus, by Lemma \ref{lemma:zeroDimension}, we get
$$
\dim(2\Rea\psi_{s,t})^L=\dim(\psi_{s,t}+\overline{\psi_{s,t}})^L=2\dim\psi_{s,t}^L\geq 2\dim\psi_{s,t}^{N_p}=0.
$$
It remains to show that $2\Rea\psi_{s,t}$ satisfies the weak gap condition. By means of Lemma \ref{lemma:zeroDimension}, this boils down to proving that
$$
\dim(2\Rea\psi_{s,t})\geq 2\dim(2\Rea\psi_{s,t})^H
$$
for any subgroup $H\leq N_{pq^2}$ of order $q$. Using, once again, the information from Table \ref{table:charactersNpq2}, we get
$$
\dim(2\Rea\psi_{s,t})=2q>4\geq 2\cdot 2\cdot\frac{1}{q}(1+q-1)\geq 2\dim(2\Rea\psi_{s,t})^H.
$$
\end{proof}
\begin{lemma}\label{lemma:noznzeroPO}
Let $s\neq 0$. Then, for any $t=1,...,r$, the $\mathbb{R}G$-modules $U=2\Rea\psi_{s,t}$ and $V=2\Rea\psi_{q-s,t}$ are not isomorphic and $\mathcal{P}$-matched. As a result, $\PO_{\rm w}^{\mathcal{L}}(N_{pq^2})\neq 0$.
\end{lemma}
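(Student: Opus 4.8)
The plan is to reduce everything to comparing character values from Table \ref{table:charactersNpq2}, exploiting that $U$ and $V$ disagree \emph{only} on the single conjugacy-class type of composite order, while agreeing on every element of prime power order. Since $U=2\Rea\psi_{s,t}$ and $V=2\Rea\psi_{q-s,t}$ are both characters of real \emph{irreducible} modules by Corollary \ref{corollary:weakGapNpq2} (applied to the nonzero indices $s$ and $q-s$; note $0<q-s<q$), two such modules are isomorphic iff their characters coincide, and $\Res^{N_{pq^2}}_P U\cong\Res^{N_{pq^2}}_P V$ iff $U$ and $V$ agree on every element of $P$. The same corollary also tells us that each of $U,V$ satisfies the weak gap condition and has $\dim W^L=0$ for every $L\in\mathcal{L}(N_{pq^2})$, so the final assertion $\PO_{\rm w}^{\mathcal{L}}(N_{pq^2})\neq 0$ follows at once from $U\not\cong V$ (giving $U-V\neq 0$) together with $\mathcal{P}$-matching (giving $U-V\in\PO(N_{pq^2})$).

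For the $\mathcal{P}$-matching I would first observe that any subgroup $P\leq N_{pq^2}$ of prime power order has order $1,p,q$ or $q^2$, so every element of $P$ has prime power order. Reading off the orders in Table \ref{table:charactersNpq2}, the elements of prime power order are exactly those in the classes $(1,1)$, $(1,\beta^{v_j})$, $(1,\gamma^n)$, $(\alpha^l,\gamma^n)$ and $(\alpha^l,1)$, the only composite-order class being $(\alpha^l,\beta^{v_j})$ of order $pq$. On each prime-power class the characters of $U$ and $V$ agree: both vanish on the two $\gamma$-classes; on $(1,1)$ and $(1,\beta^{v_j})$ the value of $\psi_{s,t}$ does not involve $s$ (it is $q$, resp.\ $\sigma_{t,v_j}$); and on $(\alpha^l,1)$ the values $q\zeta_q^{ls}$ and $q\zeta_q^{l(q-s)}=q\zeta_q^{-ls}$ share the common real part $q\cos(2\pi ls/q)$. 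Hence $U$ and $V$ agree on every element of prime power order, so $\Res^{N_{pq^2}}_P U\cong\Res^{N_{pq^2}}_P V$ for all such $P$, i.e.\ $U$ and $V$ are $\mathcal{P}$-matched.

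The main obstacle is the non-isomorphism, which I would prove at the level of complex characters. Decompose $U=\psi_{s,t}+\overline{\psi_{s,t}}$ and $V=\psi_{q-s,t}+\overline{\psi_{q-s,t}}$ into irreducibles; by linear independence of irreducible characters, $U\cong V$ would force the multiset equality $\{\psi_{s,t},\overline{\psi_{s,t}}\}=\{\psi_{q-s,t},\overline{\psi_{q-s,t}}\}$. Writing $\psi_{s,t}=\rho_s\times\chi_t$ and using $\overline{\rho_s}=\rho_{q-s}$, this reduces to understanding $\overline{\chi_t}$. The key input is that $\overline{\chi_t}=\chi_{t'}$ with $t'\neq t$: indeed $\overline{\chi_t}(\beta^x)=\sigma_{t,-x}$ is the character attached to the coset $-v_t\langle u\rangle$, which differs from $v_t\langle u\rangle$ because $-1\notin\langle u\rangle$ — and here is exactly where oddness of $q$ enters, since $|\langle u\rangle|=q$ is odd whereas $-1$ has order $2$ in $\mathbb{Z}_p^*$. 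Consequently $\overline{\psi_{s,t}}=\psi_{q-s,t'}$ and $\overline{\psi_{q-s,t}}=\psi_{s,t'}$, so the required equality becomes $\{\psi_{s,t},\psi_{q-s,t'}\}=\{\psi_{q-s,t},\psi_{s,t'}\}$; since $s\neq q-s$ (as $q$ is odd and $s\neq 0$) and $t\neq t'$, the character $\psi_{s,t}$ matches neither entry on the right, a contradiction. Thus $U\not\cong V$. Equivalently, the failure is visible directly on the composite class $(\alpha^l,\beta^{v_j})$, where $\psi_{s,t}$ and $\psi_{q-s,t}$ take values $\zeta_q^{ls}\sigma_{t,v_j}$ and $\zeta_q^{-ls}\sigma_{t,v_j}$ with differing real parts once $\sigma_{t,v_j}$ is non-real.

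Putting these together, $U-V$ lies in $\PO_{\rm w}^{\mathcal{L}}(N_{pq^2})$ and is nonzero, which proves the lemma.
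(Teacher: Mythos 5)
Your proof is correct, and your $\mathcal{P}$-matching argument (every prime-power-order subgroup of $N_{pq^2}$ has order $1,p,q$ or $q^2$, and the two characters agree on every prime-power-order class, the disagreement being confined to the composite-order classes $(\alpha^l,\beta^{v_j})$) is exactly what the paper's one-line appeal to Table \ref{table:charactersNpq2} leaves implicit. Where you genuinely diverge is the non-isomorphism. The paper first shows that $\chi_t$ is not real-valued by computing its Frobenius--Schur indicator ($\iota(\chi_t)=0$, via the same exponential sum as in Lemma \ref{lemma:zeroDimension}), and then exhibits an explicit element $g=(\alpha^l,\beta^{x})$ of order $pq$ on which the real characters of $U$ and $V$ differ, the difference being $4\Ima(\rho_s(\alpha^l))\Ima(\chi_t(\beta^x))\neq 0$. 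You instead identify $\overline{\chi_t}$ as the irreducible $\chi_{t'}$ attached to the coset $-v_t\langle u\rangle$, note that $t'\neq t$ because $-1\notin\langle u\rangle$ (as $|\langle u\rangle|=q$ is odd while $-1$ has order $2$), deduce $\overline{\psi_{s,t}}=\psi_{q-s,t'}$, and conclude by comparing multisets of irreducible constituents of the complexifications, using $s\neq q-s$ and $t\neq t'$. Both routes hinge on the same fact --- $\chi_t$ is not real-valued --- but your coset argument makes the reason for it structurally transparent (it is precisely the oddness of $q$), whereas the paper's pointwise evaluation is more computational and closer to how one would verify the claim without a clean parametrization of the irreducibles. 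Your closing ``equivalently'' aside essentially reproduces the paper's argument; if you wanted to rely on it alone you would also need to record that $\Ima(\zeta_q^{ls})\neq 0$ for a suitable $l$ (e.g.\ $l=1$, since $q\nmid s$ and $q$ is odd), but your main multiset argument does not depend on this.
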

\begin{proof}
It follows from Table \ref{table:charactersNpq2} that $U$ and $V$ are $\mathcal{P}$-matched. Note that $U=\rho_s\times\chi_t$ and $V=\overline{\rho_s}\times\chi_t$. By the similar computation as in the proof of Lemma \ref{lemma:zeroDimension}, we establish the Frobenius-Schur indicator of character $\chi_t$.
\begin{eqnarray*}
\iota(\chi_t)&=&\frac{1}{|F_{p,q}|}\sum_{g\in F_{p,q}}\chi_{t}(g^2)=\frac{1}{pq}\Big(q+\sum_{|g|=p}\chi_{t}(g^2)\Big)=\frac{1}{pq}\Big(q+\sum_{|g|=p}\chi_t(g)\Big)\\
&=&\frac{1}{pq}\Big(q+\sum_{x=1}^{p-1}\sigma_{t,x}\Big)=0.
\end{eqnarray*}
Thus, $\chi_t$ is not real-valued and we can take $x=0,...,p-1$ such that $\Ima(\chi_t(\beta^x))\neq 0$. Now, take $l=1,...,q-1$ and put $g=(\alpha^l,\beta^{x})$. Clearly, $g$ is an element of order $pq$. Then, the character of $U$ evaluated on $g$ is equal to the number
\begin{eqnarray*}
\chi_U(g)&=&2\Rea\psi_{s,t}(g)=2\Rea(\rho_s(\alpha^l)\chi_t(\beta^x))\\
&=&2(\Rea(\rho_s(\alpha^l))\Rea(\chi_t(\beta^x))-\Ima(\rho_s(\alpha^l))\Ima(\chi_t(\beta^x))).
\end{eqnarray*}
Analogously, the character of $V$ evaluated on $g$ is equal to
\begin{eqnarray*}
\chi_V(g)&=&2\Rea\psi_{q-s,t}(g)=2\Rea(\overline{\rho_s(\alpha^l)}\chi_t(\beta^x))\\
&=&2(\Rea(\rho_s(\alpha^l))\Rea(\chi_t(\beta^x))+\Ima(\rho_s(\alpha^l))\Ima(\chi_t(\beta^x)))\\
&\neq& \chi_U(g).
\end{eqnarray*}
\end{proof}
\section{$G_{p,q}$ is a special Oliver group with $\prim(G_{p,q})\geq 2$}
We divide the material contained in this section into three parts. In the first, we determine conjugacy classes of $G_{p,q}$. Using this, we show that $\prim(G_{p,q})\geq 2$. In the next part, we establish all normal subgroups of $G_{p,q}$ and infer the necessary information concerning the quotients of $G_{p,q}$. Finally, we use the performed computations to prove that $G_{p,q}$ is an example of a special Oliver group.
\subsection{Conjugacy classes of $G_{p,q}$}
Any element of $G_{p,q}$ is either of the form $x_1=(ba^l,c^m)$ or $x_2=(a^l,c^m)$ for some $l=0,...,pq-1$ and $m=0,...,q-1$. In the first case, its inverse $x_1^{-1}=(ba^{li^{-m}},c^{-m})$, while in the second $x_2^{-1}=(a^{-li^{-m}},c^{-m})$.

Let $g\in G_{p,q}$. We have the following possibilities.
\begin{enumerate}[(1)]
    \item $g=(ba^{l_0},c^{m_0})$. Then 
    $$
    \begin{tabular}{ccc}
    $x_1gx_1^{-1}=(ba^{l(1+i^{m_0})-l_0i^m},c^{m_0})$&and&
    $x_2gx_2^{-1}=(ba^{-l(1+i^{m_0})+l_0i^m},c^{m_0}).$
    \end{tabular}
    $$
    Note that the expression $l(1+i^{m_0})-l_0i^m$ can take any remainder modulo $pq$. Since $i\equiv 1\Mod{q}$, it follows that $l(1+i^{m_0})-l_0i^m\equiv 2l-l_0\Mod{q}$ and substituting subsequent values for $l=0,...,pq-1$, we can obtain any pair of remainders of $l(1+i^{m_0})-l_0$ modulo $p$ and $q$. We conclude then from the Chinese Remainder Theorem, that for any $l'=0,...,pq-1$, there exist $l=0,...pq-1$ such that $l(1+i^{m_0})-l_0i^0=l(1+i^{m_0})-l_0\equiv l'\Mod{pq}$. Therefore
    $$
    (g)=\{(ba^l,c^{m_0})|l=0,...,pq-1\}.
    $$
    Note that $(b,c^{m_0})^n=(b^n,c^{nm_0})$. Hence $|g|=2q$ if $m_0\neq 0$ and $|g|=2$ if $m_0=0$.
    \item $g=(a^{l_0},c^{m_0})$, where $m_0\neq 0$. Then
    $$
    \begin{tabular}{ccc}
    $x_1gx_1^{-1}=(a^{l(i^{m_0}-1)-l_0i^m},c^{m_0})$&and&
    $x_2gx_2^{-1}=(a^{-l(i^{m_0}-1)+l_0i^m},c^{m_0}).$
    \end{tabular}
    $$
    We have $l(i^{m_0}-1)-l_0i^m\equiv -l_0\Mod{q}$ and substituting subsequent values for $l$, we can achieve all remainders modulo $p$ of $l(i^{m_0}-1)-l_0i^m$. If $r_0$ is the remainder modulo $q$ of $l_0$, it follows then that
    $$
    (g)=\{(a^{r_0+lq},c^{m_0}),(a^{-r_0+lq},c^{m_0})|l=0,...,p-1\}.
    $$
    For any $n\geq 0$
    $$
    g^n=(a^{l_0(1+i^{m_0}+...+i^{(n-1)m_0})},c^{m_0})=(a^{l_0\cdot\frac{1-i^{nm_0}}{1-i^{m_0}}},c^{nm_0}).
    $$
    Thus $q||g|$. On the other hand $p|\frac{1-i^{qm_0}}{1-i^{m_0}}$ since $1-i^{qm_0}$ is divisible by $pq$ and $p\nmid 1-i^{m_0}$. Moreover, $1+i^{m_0}+...+i^{(q-1)m_0}\equiv q\equiv 0\Mod{q}$, so $pq|\frac{1-i^{qm_0}}{1-i^{m_0}}$ and $|g|=q$.
    \item $g=(a^{l_0},1)$. The computations of conjugacy class elements reduce then to
    $$
    \begin{tabular}{ccc}
    $x_1gx_1^{-1}=(a^{-l_0i^m},1)$&and&
    $x_2gx_2^{-1}=(a^{l_0i^m},1).$
    \end{tabular}
    $$
    If $p\nmid l_0$, then all the numbers from the set $S_{l_0}=\{\pm l_0i^m|m=0,...,q-1\}$ give different remainders modulo $p$ - this follows from the definition of $i$. Thus, we have $(p-1)/2$ such conjugacy classes, each with $2q$ elements and $$(g)=\{(a^{l_0i^m},1),(a^{-l_0i^m},1)|m=0,...,q-1\}$$
    Moreover, for any $n\geq 0$, $g^n=(a^{nl_0},1)$, so $|g|=pq$ if $q\nmid l_0$ and $|g|=p$ if $q|l_0$.
    
    If $p|l_0$ and $q\nmid l_0$, then the set $S_{l_0}$ reduces to two elements, $(a^{l_0},1)$ and $(a^{-l_0},1)$. We have $(q-1)/2$ such classes and
    $$
    \begin{tabular}{ccc}
    $(g)=\{(a^{l_0},1),(a^{-l_0},1)\}$&and&
    $|g|=q.$
    \end{tabular}
    $$
    Finally, the last class left is the class of the identity element, $(g)=\{(1,1)\}$.
\end{enumerate}
The following table summarizes the information about the conjugacy classes of $G_{p,q}$ and orders of its elements.
\begin{table}[H]
$$
\begin{tabular}{c|cccccccc}
     $g$&$(1,1)$&$B$&$E_s$&$C_{m}$&$D_{r,m}$&$F_s$&$B_m$&$A_l$ \\
     \hline
     $|g|$&$1$&$2$&$p$&$q$&$q$&$q$&$2q$&$pq$\\
     $|(g)|$&$1$&$pq$&$2q$&$p$&$2p$&$2$&$pq$&$2q$\\
     \# $(g)$&$1$&$1$&$\frac{1}{2}r$&$q-1$&$\frac{1}{2}(q-1)^2$&$\frac{1}{2}(q-1)$&$q-1$&$\frac{1}{2}(q-1)r$
\end{tabular}
$$
\caption{\label{table:classesGpq}Conjugacy classes of $G_{p,q}$.}
\end{table}
where
$$
    \begin{tabular}{cc}
    $B=(b,1),$&$E_s=(a^{qs},1), s=1,...,p-1,$\\
    $C_m=(1,c^m), m=1,...,q-1,$&$D_{r,m}=(a^r,c^m), m=1,...,q-1, q\nmid r,$\\
    $F_s=(a^{ps},1), s=1,...,q-1,$&$B_m=(b,c^m), m=1,...,q-1,$\\
    $A_l=(a^l,1), p,q\nmid l.$
    \end{tabular}
    $$
\begin{lemma}\label{lemma:primaryNumber}
$\prim(G_{p,q})=\frac{1}{2}(q-1)(r+1)$. Thus $\prim(G_{p,q})\geq 2$.
\end{lemma}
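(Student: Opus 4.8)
The plan is to count, among all the conjugacy classes listed in Table~\ref{table:classesGpq}, exactly those real conjugacy classes whose representatives have order divisible by at least two distinct primes, and to organize the count so that the two relevant families contribute the two factors of the product $\frac{1}{2}(q-1)(r+1)$.

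First I would read off from the order row of Table~\ref{table:classesGpq} which classes qualify. The element orders occurring are $1,2,p,q,q,q,2q,pq$, so the only orders divisible by two distinct primes are $2q$ (the classes $B_m$) and $pq$ (the classes $A_l$). Hence $\prim(G_{p,q})$ equals the number of real conjugacy classes among the $A_l$'s plus the number among the $B_m$'s, and everything reduces to deciding, in each family, whether an ordinary class already contains the inverses of its elements or whether it must be merged with a second class to form $(g)^{\pm}$.

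For the $A_l$ family I would invoke the computation of case (3) with $p\nmid l_0$, where the class equals $\{(a^{l_0 i^m},1),(a^{-l_0 i^m},1)\mid m=0,\dots,q-1\}$. Since the inverse of $(a^{l_0},1)$ is $(a^{-l_0},1)$, which is the $m=0$ entry of the second list, each such class is already closed under taking inverses; thus the real classes coincide with the ordinary ones and there are $\frac{1}{2}(q-1)r$ of them. For the $B_m$ family I would use case (1): the class of $(b,c^{m})$ is $\{(ba^l,c^{m})\mid l=0,\dots,pq-1\}$, in which the second coordinate $c^m$ is constant. Because $(b,c^m)^{-1}=(b,c^{-m})=(b,c^{q-m})=B_{q-m}$, and, $q$ being an odd prime, $m\neq q-m$ for every $m\in\{1,\dots,q-1\}$, the element $B_m$ and its inverse lie in \emph{distinct} ordinary classes. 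Hence the $q-1$ ordinary $B$-classes pair up into $\frac{1}{2}(q-1)$ real classes.

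Adding the two contributions gives $\prim(G_{p,q})=\frac{1}{2}(q-1)r+\frac{1}{2}(q-1)=\frac{1}{2}(q-1)(r+1)$. For the final inequality I would note that $q\geq 3$ forces $q-1\geq 2$, while $q\mid(p-1)$ gives $r=(p-1)/q\geq 1$ and hence $r+1\geq 2$; therefore $\prim(G_{p,q})\geq\frac{1}{2}\cdot 2\cdot 2=2$. The only place demanding care is the real-conjugacy bookkeeping in the two families---specifically verifying that the $A_l$-classes are self-inverse whereas the $B_m$-classes genuinely split into inverse pairs---since a miscount there would alter the stated value; the remaining steps are immediate from Table~\ref{table:classesGpq}.
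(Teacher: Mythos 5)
Your proposal is correct and follows essentially the same route as the paper: both identify $(B_m)$ and $(A_l)$ as the only classes of composite-order elements via Table~\ref{table:classesGpq}, show the $A_l$-classes are closed under inversion while the $B_m$-classes pair up as $(B_m)\cup(B_{q-m})$, and sum $\frac{1}{2}(q-1)r+\frac{1}{2}(q-1)$. The only cosmetic difference is that you verify self-inverseness of $(A_l)$ from the explicit class description in case (3), whereas the paper exhibits the conjugation $(b,1)(a^l,1)(b,1)^{-1}=(a^{-l},1)$.
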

\begin{proof}
We establish first the real conjugacy classes of $G_{p,q}$ whose elements are not of prime power order. Let $g\in G$ be such an element. Obviously, we can consider only those $g$ which are the distinguished conjugacy class representatives. It follows from Table \ref{table:classesGpq} that $g\in (B_m)$ or $g\in (A_l)$ for some $m=1,...,q-1$ and $l$ not divisible by $p$ and $q$. In the first case, $g=(b,c^m)$ and $g^{-1}=(b,c^{-m})$, so $(g)\neq (g^{-1})$. This yields $(q-1)/2$ real conjugacy classes of the form $(B_m)^{\pm}=(B_m)\cup(B_{q-m})$ for any $m=1,...,(q-1)/2$. In case $g\in (A_l)$, we have $g=(a^l,1)$ and $g^{-1}=(a^{-l},1)$ and $g$ is conjugate to $g^{-1}$,
$$
(b,1)(a^l,1)(b,1)^{-1}=(a^{-l},1).
$$
Thus, each of the classes $(A_l)$ constitute the real conjugacy class. Therefore $$\prim(G_{p,q})=\frac{1}{2}(q-1)+\frac{1}{2}(q-1)r=\frac{1}{2}(q-1)(r+1).$$
\end{proof}
\subsection{Normal subgroups and quotients of $G_{p,q}$}
\begin{lemma}\label{lemma:ordersNormalSubgroups}
If $N\trianglelefteq G_{p,q}$, then $|N|\in\{1,p,q,pq,2pq,pq^2,2pq^2\}$.
\end{lemma}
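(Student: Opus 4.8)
The plan is to use that a normal subgroup $N\trianglelefteq G_{p,q}$ is a union of conjugacy classes containing the identity, combined with Lagrange's theorem. Since $|G_{p,q}|=2pq^2$, the order $|N|$ must be one of the divisors $1,2,p,q,q^2,2p,2q,2q^2,pq,pq^2,2pq,2pq^2$, and I would split the argument according to the parity of $|N|$. The decisive structural input, read off from Table~\ref{table:classesGpq}, is that the involutions of $G_{p,q}$ form the single conjugacy class $B$ of size $pq$ (the elements $(ba^l,1)$); apart from the identity and these involutions, every element has odd order $p$, $q$ or $pq$, or order $2q$.

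For even $|N|$ I would argue as follows. By Cauchy's theorem $N$ contains an involution, and being normal it then contains the whole class $B$, so $|N|\ge pq+1$. It therefore suffices to check that the even divisors $2,2p,2q,2q^2$ all lie strictly below $pq+1$, leaving only $2pq$ and $2pq^2$ as admissible even orders. The inequalities $2<pq+1$, $2p<pq+1$ and $2q<pq+1$ are immediate from $p,q\ge 3$. The one genuinely delicate case is $2q^2$; here I would use the arithmetic observation that, since $q\mid p-1$ with $p$ an odd prime, $p$ cannot equal the even number $q+1$, whence $p\ge 2q+1$ and $pq\ge 2q^2+q>2q^2$. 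This is the step I expect to be the main obstacle, in that it is the only place where the specific number-theoretic relation between $p$ and $q$ is actually needed.

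For odd $|N|$ the divisors to consider are $1,p,q,q^2,pq,pq^2$, all of which except $q^2$ already appear in the asserted list. To exclude $q^2$ I would first show $N\subseteq N_{pq^2}$: since $N_{pq^2}$ has index $2$, the product $NN_{pq^2}$ has order $pq^2$ or $2pq^2$, and the latter would force $|N\cap N_{pq^2}|=q^2/2\notin\mathbb{Z}$, so $N\subseteq N_{pq^2}$. Because $N\trianglelefteq G_{p,q}$, it follows that $N\trianglelefteq N_{pq^2}$, giving a normal subgroup of $N_{pq^2}$ of order $q^2$; this is impossible by the proof of Lemma~\ref{lemma:largeNpq2}, which establishes that $N_{pq^2}$ has no normal subgroup of order $q^2$. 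Combining the even and odd analyses yields $|N|\in\{1,p,q,pq,2pq,pq^2,2pq^2\}$, as claimed.
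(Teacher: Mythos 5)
Your proof is correct and follows essentially the same route as the paper's: even orders other than $2pq$ and $2pq^2$ are ruled out because a normal subgroup of even order must contain the whole class $B$ of $pq$ involutions, and order $q^2$ is ruled out by reducing to the non-existence of a normal subgroup of order $q^2$ in $N_{pq^2}$ established in the proof of Lemma~\ref{lemma:largeNpq2}. In fact you supply one detail the paper only asserts, namely the inequality $pq>2q^2$, which you correctly derive from $q\mid(p-1)$ forcing $p\geq 2q+1$.
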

\begin{proof}
$|G|$ has the following set of divisors
$$
\{1,2,p,q,2p,2q,pq,q^2,2pq,2q^2,pq^2,2pq^2\}.
$$
We show that $|N|\notin \{2,2p,2q,q^2,2q^2\}$. Assume $2||N|$. Then there is some element of order $2$ in $N$. Since $N$ is a normal subgroup of $G_{p,q}$, it follows from Table \ref{table:classesGpq} that $(B)\subseteq N$ and thus $|N|\geq pq$. Observe that $pq>2,2p,2q,2q^2$. Hence $|N|\notin\{2,2p,2q,2q^2\}$. 

Now, suppose $|N|=q^2$. We conclude from Table \ref{table:classesGpq} that $N\trianglelefteq N_{pq^2}$. However, this possibility was already excluded in the proof of Lemma \ref{lemma:largeNpq2}.
\end{proof}
Consider the following subgroups of $G_{p,q}$.
$$
    \begin{tabular}{cc}
    $N_{2pq}=\{(b^{\varepsilon}a^l,1)|\varepsilon=0,1, l=0,...,pq-1\}$
    &$N_p=\{(a^{qs},1)|s=0,...,p-1\}$\\
    $N_q=\{(a^{ps},1)|s=0,...,q-1\}$&$N_{pq}^1=\{(a^l,1)|l=0,...,pq-1\}$
    \end{tabular}
    $$
and
$$
N_{pq}^2=\{(a^{qs},c^m)|s=0,...,p-1, m=0,...,q-1\}.
$$
\begin{lemma}\label{lemma:uniqueNormal}
$N_{pq^2}, N_{2pq}, N_{pq}^1, N_{pq}^2, N_p$ and $N_q$ are the only proper normal subgroups of $G_{p,q}$. Moreover, $N_{2pq}\cong D_{2pq}$, $N_{pq}^1\cong C_{pq}$, $N_{pq}^2\cong F_{p,q}$, $N_p\cong C_p$ and $N_q\cong C_q$.
\end{lemma}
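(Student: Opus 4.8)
The plan is to separate the proper normal subgroups according to whether or not $p$ divides their order, and to reduce the bulk of the classification to the much smaller quotient $G_{p,q}/N_p$. First I would record that each of the six listed subgroups is genuinely normal and determine its isomorphism type: $N_p=\langle a^q\rangle\cong C_p$ and $N_q=\langle a^p\rangle\cong C_q$ are normalized by $b$ (which inverts $a$) and by $c$ (which sends $a^q\mapsto(a^q)^i$ and fixes $a^p$ since $i\equiv 1\Mod q$); $N_{pq}^1=\langle a\rangle\cong C_{pq}$; $N_{pq}^2=\langle a^q,c\rangle\cong F_{p,q}$, because $c$ acts on $\langle a^q\rangle\cong C_p$ by the order-$q$ automorphism $x\mapsto x^i$; $N_{2pq}=\langle a,b\rangle\cong D_{2pq}$; and $N_{pq^2}=\langle a,c\rangle$, the unique index-$2$ subgroup, whose normality is automatic and whose uniqueness follows from the abelianization $G_{p,q}^{\mathrm{ab}}\cong C_{2q}$ having a single subgroup of index $2$. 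By Lemma \ref{lemma:ordersNormalSubgroups} the only admissible orders of a proper normal subgroup are $p,q,pq,2pq,pq^2$.

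Next I would observe that $N_p$ is a normal Sylow $p$-subgroup, hence the unique subgroup of order $p$; consequently every element of order $p$ generates $N_p$, so a normal subgroup $N$ contains $N_p$ if and only if $p\mid|N|$. The only admissible order coprime to $p$ is $q$, and I would dispose of this case by a conjugacy-class count: the classes consisting of elements of order $q$ are $F_s$ (size $2$), $C_m$ (size $p$) and $D_{r,m}$ (size $2p$), and since $q\mid p-1$ forces $p\ge q+1>q-1$, a normal subgroup of order $q$ cannot contain a $C_m$ or $D_{r,m}$ class. Being a union of conjugacy classes of total size $q$, it must therefore consist of $\{1\}$ together with all $\tfrac12(q-1)$ classes $F_s$, that is, $N=N_q$.

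It then remains to classify the normal subgroups containing $N_p$, which by the correspondence theorem are the preimages of the normal subgroups of $\overline G:=G_{p,q}/N_p$. I would identify $\overline G\cong D_{2q}\times C_q$, with $D_{2q}=\langle\bar a,\bar b\rangle$ and central factor $\langle\bar c\rangle$ (here $\bar c$ commutes with $\bar a$ because $i\equiv 1\Mod q$, and with $\bar b$ because $b$ and $c$ commute in $G_{p,q}$). The normal subgroups of $D_{2q}\times C_q$ are exactly $1,\langle\bar a\rangle,\langle\bar c\rangle,\langle\bar a,\bar c\rangle,\langle\bar a,\bar b\rangle$ and $\overline G$; pulling these back along $G_{p,q}\to\overline G$ yields precisely $N_p,\,N_{pq}^1,\,N_{pq}^2,\,N_{pq^2},\,N_{2pq}$ and $G_{p,q}$. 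Combining this with the preceding paragraph gives exactly the six proper normal subgroups claimed.

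The main obstacle is the enumeration of the normal subgroups of $D_{2q}\times C_q$. Writing $\langle\bar a,\bar c\rangle\cong C_q\times C_q$ for the index-$2$ abelian subgroup on which $\bar b$ acts by inverting $\bar a$ and fixing $\bar c$, one must check that the only $\bar b$-invariant subgroups are $1,\langle\bar a\rangle,\langle\bar c\rangle$ and the whole plane, so that in particular none of the $q-1$ genuinely diagonal lines is normal, and that any normal subgroup containing an element outside $\langle\bar a,\bar c\rangle$ must already contain $\langle\bar a\rangle$ and is therefore either $\langle\bar a,\bar b\rangle$ or all of $\overline G$. The remaining bookkeeping, matching each preimage to the correct $N_\bullet$ and reading off its isomorphism type, is then routine.
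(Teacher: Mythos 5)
Your proof is correct, but it takes a genuinely different route from the paper's. The paper establishes normality of the six subgroups by observing that each is a union of conjugacy classes from Table \ref{table:classesGpq}, and then excludes any further normal subgroup by a direct case analysis on each admissible order $|N|\in\{p,q,pq,2pq,pq^2\}$ supplied by Lemma \ref{lemma:ordersNormalSubgroups}, in each case counting the elements of the conjugacy classes that $N$ would be forced to contain until the count exceeds $|N|$. You instead exploit the fact that $N_p$ is the unique (normal) Sylow $p$-subgroup, so that a proper normal subgroup either has order $q$ --- which you dispose of with a short class count, the only point where Table \ref{table:classesGpq} is really needed --- or contains $N_p$, in which case the correspondence theorem reduces the problem to listing the normal subgroups of $G_{p,q}/N_p\cong C_q\times D_{2q}$, a group of order only $2q^2$. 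This buys a cleaner argument that avoids the paper's more delicate counting in the $|N|=pq$ and $|N|=2pq$ cases, at the price of having to prove the quotient identification (which the paper only establishes later, in Lemma \ref{lemma:quotientsGpq}, but for which you give an independent and correct argument, so there is no circularity) and of carrying out the enumeration of normal subgroups of $C_q\times D_{2q}$. Your sketch of that enumeration is sound: the genuinely diagonal lines of $\langle\bar a,\bar c\rangle\cong C_q\times C_q$ fail $\bar b$-invariance because $2$ is invertible modulo the odd prime $q$, and the same invertibility shows that a normal subgroup meeting the nontrivial coset of $\langle\bar a,\bar c\rangle$ already contains $\langle\bar a\rangle$ and hence equals $\langle\bar a,\bar b\rangle$ or the whole quotient. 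Both proofs rest on Lemma \ref{lemma:ordersNormalSubgroups}, and your identifications of the isomorphism types agree with the paper's.
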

\begin{proof}
It follows from Table \ref{table:classesGpq} that all the subgroups mentioned in the Lemma consist of the whole conjugacy classes and thus are normal. Clearly, $N_{2pq}\cong D_{2pq}$, $N^1_{pq}\cong C_{pq}$, $N^2_{pq}\cong F_{p,q}$ (since $N_{pq}^2$ is not abelian and the unique nonabelian group of order $pq$ is $F_{p,q}$), $N_p\cong C_p$ and $N_q\cong C_q$. We show that there are no other proper normal subgroups in $G_{p,q}$. 

Assume for the converse that there exists a proper normal subgroup $N$ of $G_{p,q}$ such that $N\notin\{N_p,N_q,N_{pq}^1,N_{pq}^2,N_{2pq},N_{pq^2}\}$. From Lemma \ref{lemma:ordersNormalSubgroups}, we have $$|N|\in\{p,q,pq,2pq,pq^2\}.$$ 

If $|N|=pq^2$, then the only possibility is $N=N_{pq^2}$ which is a contradiction. 

Suppose $|N|=2pq$. Then there exists an element of order $2$ contained in $N$. Thus, $(B)\subseteq N$. Since $N\neq N_{2pq}$, it follows that $g=(x,c^m)\in N$ for some $x\in D_{2pq}$ and $m\neq 0$. Since $(1,1)\in N$, $(B)\subseteq N$ and $|\{(1,1)\}\cup(B)|=pq+1$, it follows that $|(g)|<pq$. We conclude then from Table \ref{table:classesGpq} that $g\in(C_m)$ or $g\in(D_{r,m})$ for some r not divisible by $q$. Thus $C_m\in N$ or $D_{r,m}\in N$. Suppose $D_{r,m}\in N$. Then, for $n\geq 0,$ 
$$
D_{r,m}^n=(a^{r(1+i^m+...+i^{(n-1)m})},c^{nm})
$$
and $D_{r,m}^n\in D_{r_n,(nm \Mod{q})}$ for any $n=1,...,q-1$, where $r_n\neq 0$. Hence $S=(D_{r,m})\cup(D_{r_2,(2m\Mod{q})})\cup...\cup(D_{r_{q-1},((q-1)m\Mod{q})})\subseteq N$. However, $|S|=2p(q-1)>pq$. A contradiction. This means that $C_m\in N$. Therefore $\langle C_m\rangle\leq N$ and thus, for any $m=1,...,q-1$, $(C_m)\subseteq N$. On the other hand, $|(C_1)\cup...\cup(C_{q-1})|=p(q-1)<pq-1$ which means that $D_{r,m}\in N$ for some $m\neq 0$ and $q\nmid r$ which we have already excluded.

Assume $|N|=pq$. Then $N$ has no element of order $2$ and, since $N\neq N_{pq}^1$, $C_m\in N$ or $D_{r,m}\in N$ for some $m\neq 0$ and $q\nmid r$. The latter case implies $|N|>pq$. If $C_m\in N$, then, since $|\{(1,1)\}\cup(C_1)\cup...\cup(C_{q-1})|<pq$, it follows that one of the elements $A_l$ or $F_s$ is contained in $N$ for some $p,q\nmid l$ and $s=1,...,q-1$. If $A_l\in N$, again, we obtain a contradiction for this leads to $|N|>pq$ (for $\langle A_l\rangle=N_{pq}^1$). If $F_s\in N$, then $(F_1)\cup...\cup(F_{q-1})\subseteq N$. On the other hand, there exist an element of order $p$ in $N$ and we conclude from Table \ref{table:classesGpq} that $(E_1)\cup...\cup(E_{p-1})\subseteq N$. Thus,
\begin{eqnarray*}
|N|&\geq&|\{(1,1)\}\cup\bigcup_{r=1}^{q-1}(C_r)\cup\bigcup_{s=1}^{q-1}(F_s)\cup\bigcup_{t=1}^{p-1}(E_t)|\\
&=&1+p(q-1)+2\cdot\frac{1}{2}(q-1)+2q\cdot\frac{1}{2}r=pq+q-1>pq.
\end{eqnarray*}
Thus, $F_s\notin N$ for any $s=1,...,q-1$ and
$$
N=\{(1,1)\}\cup(C_1)\cup...\cup(C_{q-1})\cup(E_1)\cup...\cup(E_{p-1})=N^2_{p,q}
$$
which contradicts our assumption.

Let $|N|=q$ and $g\in N$ be an element of order $q$. If $g\in (C_m)$ or $g\in D_{r,m}$ for some $m\neq 0$ and $q\nmid r$, we conclude from Table \ref{table:classesGpq} that this implies $|N|>q$. Thus, $g\in (F_s)$ which means that it is impossible that $N\neq N_q$. 

If $|N|=p$, Table \ref{table:classesGpq} leads immediately to the contradiction. 
\end{proof}
\begin{corollary}\label{corollary:largeGpq}
$\mathcal{O}^p(G_{p,q})=G_{p,q},\:\mathcal{O}^q(G_{p,q})=N_{2pq}$ and $\mathcal{O}^2(G_{p,q})=N_{pq^2}$. Thus $\mathcal{L}(G_{p,q})=\{N_{2pq},N_{pq^2},G_{p,q}\}$.
\end{corollary}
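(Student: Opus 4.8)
The plan is to compute each of the three relevant subgroups $\mathcal{O}^2(G_{p,q})$, $\mathcal{O}^p(G_{p,q})$ and $\mathcal{O}^q(G_{p,q})$ directly from the list of admissible orders of normal subgroups supplied by Lemma \ref{lemma:ordersNormalSubgroups}, and then to read off $\mathcal{L}(G_{p,q})$ via the correspondence theorem. Throughout I would write $G = G_{p,q}$ and use $|G| = 2pq^2$, whose prime factorization has $2$-part $2$, $p$-part $p$ and $q$-part $q^2$ (recall $p$, $q$ are distinct odd primes). For a prime $\ell$, the quotient $G/\mathcal{O}^{\ell}(G)$ is the largest $\ell$-group quotient of $G$; equivalently, $\mathcal{O}^{\ell}(G)$ is the smallest normal subgroup whose index is a power of $\ell$, necessarily dividing the $\ell$-part of $|G|$.

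First, for $\ell = p$, a normal subgroup $N$ with $G/N$ a $p$-group must have index $1$ or $p$; index $p$ would force $|N| = 2q^2$, which is not among the admissible orders $\{1,p,q,pq,2pq,pq^2,2pq^2\}$ of Lemma \ref{lemma:ordersNormalSubgroups}. Hence the only possibility is $N = G$, giving $\mathcal{O}^p(G) = G$. For $\ell = q$, the index must lie in $\{1,q,q^2\}$; index $q^2$ would need $|N| = 2p$, again not admissible, whereas index $q$ forces $|N| = 2pq$, realized by $N_{2pq}$ with $G/N_{2pq} \cong C_q$. Since the only normal subgroups of order divisible by $2p$ are $N_{2pq}$ and $G$, the smallest such $N$ is $N_{2pq}$, so $\mathcal{O}^q(G) = N_{2pq}$. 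For $\ell = 2$, the index is $1$ or $2$; index $2$ forces $|N| = pq^2$, realized by $N_{pq^2}$ with $G/N_{pq^2} \cong C_2$, and this is the smallest proper such subgroup, so $\mathcal{O}^2(G) = N_{pq^2}$.

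Finally, to determine $\mathcal{L}(G)$ I would use these values together with the correspondence theorem. A subgroup $H$ is large precisely when it contains one of $\mathcal{O}^p(G) = G$, $\mathcal{O}^q(G) = N_{2pq}$, or $\mathcal{O}^2(G) = N_{pq^2}$. The condition $G \leq H$ forces $H = G$. The subgroups containing the normal subgroup $N_{2pq}$ correspond bijectively to subgroups of $G/N_{2pq} \cong C_q$, of which there are only two, so they are exactly $N_{2pq}$ and $G$; likewise the subgroups containing $N_{pq^2}$ correspond to subgroups of $G/N_{pq^2} \cong C_2$, namely $N_{pq^2}$ and $G$. Collecting these yields $\mathcal{L}(G) = \{N_{2pq}, N_{pq^2}, G\}$. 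I expect no serious obstacle: the entire argument is bookkeeping against the admissible-order list of Lemma \ref{lemma:ordersNormalSubgroups} and the explicit identification of normal subgroups in Lemma \ref{lemma:uniqueNormal}. The one point requiring a little care is the minimality claim for each $\mathcal{O}^{\ell}(G)$ — that the exhibited subgroup is genuinely the smallest normal subgroup with $\ell$-power index — which follows because in each case the admissible orders leave only one candidate strictly below $G$.
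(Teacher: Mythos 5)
Your proposal is correct and follows essentially the same route the paper intends: the corollary is stated without proof precisely because it is read off from Lemma \ref{lemma:ordersNormalSubgroups} (admissible orders of normal subgroups) and Lemma \ref{lemma:uniqueNormal} (the explicit list of normal subgroups), which is exactly the bookkeeping you carry out. Your identification of the large subgroups via the correspondence theorem applied to $G/N_{2pq}\cong C_q$ and $G/N_{pq^2}\cong C_2$ is the natural way to finish and matches the paper's implicit argument.
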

Since $G_{p,q}$ is the semidirect product of $D_{2pq}$ and $C_q$, it can be presented as follows.
$G_{p,q}=\langle a,b,c|a^{pq},b^2,bab^{-1}=a^{-1},c^q,cac^{-1}=a^i,cbc^{-1}=b\rangle$
and we can identify $a$ with $(a,1)$, $b$ with $(b,1)$ and $c$ with $(1,c)$.
\begin{lemma}\label{lemma:quotientsGpq}
$G_{p,q}/N^1_{pq}\cong C_{2q}$, $G_{p,q}/N_{pq}^2\cong D_{2q}$, $G_{p,q}/N_p\cong C_q\times D_{2q}$ and $G_{p,q}/N_q$ is a group not of prime power order which is not nilpotent.
\end{lemma}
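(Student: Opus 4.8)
The plan is to read off each of the four quotients directly from the presentation
$$
G_{p,q}=\langle a,b,c\mid a^{pq},\,b^2,\,bab^{-1}=a^{-1},\,c^q,\,cac^{-1}=a^i,\,cbc^{-1}=b\rangle,
$$
using the identifications $a=(a,1)$, $b=(b,1)$, $c=(1,c)$ together with the descriptions of the normal subgroups from Lemma \ref{lemma:uniqueNormal}: $N^1_{pq}=\langle a\rangle$, $N_p=\langle a^q\rangle$, $N_q=\langle a^p\rangle$ and $N^2_{pq}=\langle a^q,c\rangle$. In each case I would adjoin to the presentation the relations trivialising the relevant subgroup, simplify, and keep track of the two arithmetic facts established earlier: $i\equiv 1\Mod{q}$, and the order of $i$ modulo $p$ equals $q$ (so $i\not\equiv 1\Mod{p}$). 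A final comparison against $|G_{p,q}|=2pq^2$ confirms that nothing has collapsed unexpectedly.

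For $G_{p,q}/N^1_{pq}$ I would set $a=1$; the relations $bab^{-1}=a^{-1}$ and $cac^{-1}=a^i$ become vacuous and we are left with $\langle b,c\mid b^2,c^q,bc=cb\rangle\cong C_2\times C_q\cong C_{2q}$, the last step holding since $\gcd(2,q)=1$. For $G_{p,q}/N^2_{pq}$ I would set $a^q=1$ and $c=1$; then $\bar a$ has order dividing $q$, the relation $cac^{-1}=a^i$ degenerates to $\bar a=\bar a^{i}$ (consistent because $i\equiv 1\Mod{q}$), and what survives is $\langle a,b\mid a^q,b^2,bab^{-1}=a^{-1}\rangle\cong D_{2q}$, matching the order $2pq^2/(pq)=2q$.

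For $G_{p,q}/N_p$ I would set $a^q=1$, so again $\bar a$ has order $q$ and $\langle\bar a,\bar b\rangle\cong D_{2q}$, but now $\bar c$ survives; because $i\equiv 1\Mod{q}$ the relation $cac^{-1}=a^i$ becomes $\bar c\bar a\bar c^{-1}=\bar a$, and $\bar c$ already commutes with $\bar b$, so $\bar c$ is central of order $q$. The one point needing genuine care is that this is an \emph{internal direct} product and not merely a central extension, i.e. that $\langle\bar c\rangle\cap\langle\bar a,\bar b\rangle=\{1\}$. Since $\langle\bar c\rangle\cong C_q$ is simple as a group, the intersection is either trivial or all of $\langle\bar c\rangle$; in the latter case $\bar c$ would be a central element of order $q$ inside $D_{2q}$, impossible because $Z(D_{2q})=1$ for $q$ odd. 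Hence the intersection is trivial, the order count $2q\cdot q=2q^2=|G_{p,q}/N_p|$ shows the two subgroups generate everything, and $G_{p,q}/N_p\cong C_q\times D_{2q}$. I expect this direct-product verification to be the only step that is not a one-line simplification.

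Finally, for $G_{p,q}/N_q$ I would set $a^p=1$, so $\bar a$ has order dividing $p$ and, crucially, $cac^{-1}=a^i$ is now a \emph{nontrivial} action because $i$ has order $q$ modulo $p$; the quotient has order $2pq^2/q=2pq$, which is divisible by the three distinct primes $2$, $p$, $q$ and is therefore not a prime power. To see it is not nilpotent I would invoke the standard fact that in a finite nilpotent group elements of coprime order commute: here $\bar a$ (order $p$) and $\bar b$ (order $2$) satisfy $\bar b\bar a\bar b^{-1}=\bar a^{-1}\neq\bar a$, so they fail to commute; equivalently $\langle\bar a,\bar b\rangle\cong D_{2p}$ is a non-nilpotent subgroup, and subgroups of nilpotent groups are nilpotent. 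Either observation contradicts nilpotency and completes the lemma.
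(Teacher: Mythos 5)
Your proposal is correct, and it reaches the same four conclusions by a mildly different route. Where the paper constructs explicit epimorphisms $\varphi^1_{pq}$, $\varphi^2_{pq}$, $\varphi_p$ onto presented copies of $C_{2q}$, $D_{2q}$ and $C_q\times D_{2q}$ and then computes their kernels, you instead adjoin to the presentation of $G_{p,q}$ the relators generating each normal subgroup and simplify; since each of $N^1_{pq}=\langle a\rangle$, $N^2_{pq}=\langle a^q,c\rangle$, $N_p=\langle a^q\rangle$, $N_q=\langle a^p\rangle$ is already normal, the normal closure of the adjoined relators is exactly the subgroup being killed, and your order counts against $|G_{p,q}|=2pq^2$ rule out any further collapse — in particular the order count alone already forces $\langle\bar c\rangle\cap\langle\bar a,\bar b\rangle=1$ in the $N_p$ case, so your $Z(D_{2q})=1$ argument, while fine, is not strictly needed. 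The two computations are dual descriptions of the same thing, and yours has the incidental merit of sidestepping the paper's explicit assignment $\varphi^1_{pq}(c)=d^q$, which as written does not satisfy $\varphi^1_{pq}(c)^q=1$ for odd $q$ (it should presumably be $d^2$). The one genuinely different ingredient is the non-nilpotency of $G_{p,q}/N_q$: the paper argues that a nilpotent group of order $2pq$ would have to be cyclic and then shows no element generates the quotient, whereas you invoke the fact that elements of coprime order commute in a nilpotent group and exhibit $\bar b\bar a\bar b^{-1}=\bar a^{-1}\neq\bar a$; your argument is shorter and avoids the case analysis over candidate generators.
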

\begin{proof}
Define $\varphi_{pq}^1\colon G_{p,q}\rightarrow C_{2q}=\langle d|d^{2q}=1\rangle$ by $\varphi_{pq}^1(a)=1$, $\varphi_{pq}^1(b)=d^q$, $\varphi_{pq}^1(c)=d^q$. Obviously
$$
\varphi_{pq}^1(a^{pq})=\varphi_{pq}^1(b^2)=\varphi(c^q)=\varphi_{pq}^1(baba)=1
$$
and
$$
\varphi_{pq}^1(cac^{-1})=\varphi_{pq}^1(a^i)=\varphi_{pq}^1(cbc^{-1}b^{-1})=1
$$
and $\varphi_{pq}^1$ is a well-defined group homomorphism. It is easy to observe that $\Ker\varphi_{pq}^1=N^1_{pq}$ and that $\varphi_{pq}^1$ is surjective. Thus $G_{p,q}/N^1_{p,q}\cong C_{2q}$.

Let $\varphi_{pq}^2\colon G_{p,q}\rightarrow D_{2q}=\langle d,e|d^q=e^2=1,ede=d^{-1}\rangle$ be given by $\varphi_{pq}^2(a)=d$, $\varphi_{pq}^2(b)=e$ and $\varphi_{pq}^2(c)=1$. We have  
$$
\varphi_{pq}^2(a^{pq})=\varphi_{pq}^2(b^2)=\varphi(c^q)=\varphi_{pq}^2(baba)=\varphi_{pq}^2(cbc^{-1}b^{-1})=1
$$
and
$$
\varphi_{pq}^2(cac^{-1})=d=d^i=\varphi_{pq}^2(a^i).
$$
Thus $\varphi_{pq}^2$ is a well-defined epimorphism. Moreover, $$b^{\varepsilon}a^lc^m\in\Ker\varphi_{pq}^2\Leftrightarrow e^{\varepsilon}d^l=1\Leftrightarrow \varepsilon=0,q|l,m=0,...,q-1$$
and $\Ker\varphi_{pq}^2=\{(a^{qs},c^m)|s=0,...,p-1,m=0,...,q-1\}=N^2_{pq}$.

Put $\varphi_{p}\colon G_{p,q}\rightarrow C_q\times D_{2q}=\langle d|d^q=1\rangle\times\langle e,f|e^q=f^2=1,fef=e^{-1}\rangle$, $\varphi_{p}(a)=(1,e)$, $\varphi_{p}(b)=(1,f)$ and $\varphi_{p}(c)=(d,1)$. Obviously,
$$
\varphi_{p}(a^{pq})=\varphi_{p}(b^2)=\varphi_{p}(c^q)=\varphi_{p}(baba)=\varphi_{p}(cbc^{-1}b^{-1})=(1,1).
$$
Moreover, $\varphi_{p}(cac^{-1})=(1,e)$ and $\varphi_{p}(a^i)=(1,e^i)$. Since $i\equiv 1\Mod{q}$, it follows that $(1,e^i)=(1,e)$. Hence $\varphi_{p}$ is a well-defined homomorphism. Obviously, $\varphi_{p}$ is surjective and $b^{\varepsilon}a^lc^m\in\Ker(\varphi_{p})\Leftrightarrow m=0,\varepsilon=0,q|l$. Therefore $\Ker(\varphi_{p})=\{a^{qs}|s=0,...,p-1\}=N_p$.

Since the order of $G_{p,q}/N_q$ equals $2pq$, $G_{p,q}/N_q$ is not of prime power order. Suppose for the converse that $G_{p,q}/N_q$ is nilpotent. This means that $G_{p,q}/N_q$ is the direct product of its Sylows and, since $|G_{p,q}/N_q|$ is the product of three distinct primes, we conclude that $G_{p,q}/N_q$ has to be cyclic. Let $dN_q$ be the generator of $G_{p,q}/N_q$. If $d=(ba^l,c^m)$ for some $l=0,...,pq-1$ and $m=0,...,q-1$, then $|d|\leq 2q$ and it follows that $(dN_q)^{2q}=1$ in $G_{p,q}/N_q$. This contradicts that $G_{p,q}/N_q$ is of order $pq$. Thus, $d=(a^l,c^m)$. If $m\neq 0$, then $d^q=(1,1)$ and we obtain a contradiction. Hence, $d=(a^l,1)$. In this case, however, $d^p=(a^{pl},1)\in N_q$ which leads, again, to a contradiction.
\end{proof}
\subsection{$G_{p,q}$ is a special Oliver group}

We will need the following results of Sumi.
\begin{theorem}\label{theorem:Sumi2012}\cite{Sumi2012}[Theorem 1.2]
Let $G$ be a group with no large subgroup of prime power order. Moreover, suppose that $[G\colon\mathcal{O}^2(G)]=2$ and $\mathcal{O}^{p_0}(G)\neq G$ for a unique odd prime $p_0$ and that $G$ does not have an element of order divisible by $4$ and there is an element $g\in G$ of order $2$ not belonging to $\mathcal{O}^2(G)$ such that $2|\mathcal{O}^2(C_G(g))|\geq |C_G(g)|$. Then $G$ is not a gap group. 
\end{theorem}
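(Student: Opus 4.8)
The plan is to argue by contradiction: assume $V$ is an $\mathbb{R}G$-module witnessing that $G$ is a gap group, so that $\dim V^P > 2\dim V^H$ for every pair $P<H\le G$ with $P$ of prime power order, and $\dim V^L=0$ for every $L\in\mathcal{L}(G)$. The first thing I would record is which subgroups are forced to have vanishing fixed spaces. Since $[G\colon\mathcal{O}^2(G)]=2$ and $\mathcal{O}^{p_0}(G)\neq G$, the subgroups $L:=\mathcal{O}^2(G)$, $M:=\mathcal{O}^{p_0}(G)$ and $G$ itself all lie in $\mathcal{L}(G)$, whence $\dim V^{L}=\dim V^{M}=\dim V^{G}=0$. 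Because $p_0$ is odd and $g$ is an involution, the image of $g$ in the $p_0$-group $G/M$ is trivial, so $g\in M$; and since $g\notin L$ while $[G\colon L]=2$, we have $G=L\rtimes\langle g\rangle$.

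Next I would localize the whole problem at the involution $g$, which is where all the hypotheses are designed to bite. Put $C:=C_G(g)$ and $C_L:=C\cap L=C_G(g)\cap L$. As $g$ is central in $C$, lies outside $L$, and $[C\colon C_L]=2$, the subgroup $C$ splits as an \emph{internal} direct product $C=C_L\times\langle g\rangle$. Consequently $\mathcal{O}^2(C)=\mathcal{O}^2(C_L)$, and the hypothesis $2|\mathcal{O}^2(C_G(g))|\ge|C_G(g)|$ translates into the clean structural statement
$$
\mathcal{O}^2(C_L)=C_L,
$$
i.e.\ $C_L$ possesses no subgroup of index $2$ and hence no nontrivial one-dimensional real representation. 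This is the concrete form in which I would use the centralizer inequality.

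Now I would feed this into the gap condition. Taking the prime power subgroup $P=\langle g\rangle$ and $H=C$ (here $C\neq\langle g\rangle$, the alternative $C=\langle g\rangle$ forcing $g$ to act freely on $L$, a degenerate situation I would dispose of separately), and writing $W:=V^{\langle g\rangle}$ regarded as a real $C_L$-module — so that $\dim V^{C}=\dim W^{C_L}$ — the inequality $\dim V^{P}>2\dim V^{H}$ becomes
$$
\dim W \;>\; 2\,\dim W^{C_L}.
$$
The preceding step guarantees that every nontrivial $\mathbb{R}C_L$-constituent of $W$ has dimension at least $2$, so this single inequality is not yet contradictory; the contradiction has to be manufactured by playing it off against the remaining gap inequalities together with the vanishing $\dim V^{L}=\dim V^{M}=0$. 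Concretely, I would build a linear-programming–style certificate: a nonnegative combination of the strict inequalities $\dim V^{P_j}-2\dim V^{H_j}>0$ attached to chains $\langle g\rangle<\langle g\rangle K$ with $K$ an odd prime power subgroup of $C_L$, balanced against the equalities $\dim V^{L}=\dim V^{M}=0$, chosen so that the weighted sum collapses to $0>0$. The identity that makes the weights cancel is an Artin/Burnside-ring relation expressing $\dim W^{C_L}$ through the various $\dim W^{K}$, and it is exactly the hypothesis $\mathcal{O}^2(C_L)=C_L$ — equivalently, that $C_L$ is generated by its odd prime power subgroups — together with the uniqueness of $p_0$ and the absence of elements of order divisible by $4$ that make such a relation available.

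The main obstacle, therefore, is the construction and verification of this certificate: turning the group-theoretic statement ``$C_L$ is its own $\mathcal{O}^2$'' into a precise nonnegative linear dependence among the fixed-point-dimension functionals $V\mapsto\dim V^{K}$ that is incompatible with the strict gap inequalities. Everything before it — the reduction of the centralizer hypothesis to $\mathcal{O}^2(C_L)=C_L$ and the localization of the gap condition at $g$ — is routine; the representation-theoretic impossibility at the end is the heart of the argument, and is where the hypotheses on $p_0$, on the index of $\mathcal{O}^2(G)$, and on element orders are all consumed.
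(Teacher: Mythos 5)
This statement is not proved in the paper at all: it is quoted verbatim from Sumi \cite{Sumi2012}[Theorem 1.2] and used as a black box, so there is no in-paper argument to measure your attempt against. Judged on its own terms, your proposal has a genuine gap. The preparatory reductions are correct and cleanly done --- $g\in\mathcal{O}^{p_0}(G)$, $G=\mathcal{O}^2(G)\rtimes\langle g\rangle$, $C_G(g)=C_L\times\langle g\rangle$ with $C_L=C_G(g)\cap\mathcal{O}^2(G)$, and the translation of the centralizer inequality into $\mathcal{O}^2(C_L)=C_L$ --- but they only restate the hypotheses in a convenient form. The actual content of the theorem is the final contradiction, and there you only assert that a ``linear-programming--style certificate'' exists: a nonnegative combination of the strict gap inequalities attached to chains $\langle g\rangle<\langle g\rangle K$ that collapses to $0>0$. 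You neither exhibit such a combination nor prove it exists; by Farkas-type duality its existence is exactly equivalent to the nonexistence of a module satisfying those finitely many constraints, i.e.\ it is the theorem itself. The claim that an ``Artin/Burnside-ring relation'' expressing $\dim W^{C_L}$ through the $\dim W^{K}$ follows from $\mathcal{O}^2(C_L)=C_L$ is also not something you can wave at: $C_L$ being generated by its odd-order elements does not by itself produce a linear identity among fixed-point dimensions with the signs you need.

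Two further points. First, the case $C_G(g)=\langle g\rangle$ (so $C_L=1$) is \emph{not} excluded by the hypotheses --- the inequality $2|\mathcal{O}^2(C_G(g))|\geq|C_G(g)|$ reads $2\geq 2$ there --- so ``disposing of it separately'' is an obligation, not an aside. Second, three of the stated hypotheses (that $G$ has no large subgroup of prime power order, that $p_0$ is the \emph{unique} odd prime with $\mathcal{O}^{p_0}(G)\neq G$, and that $G$ has no element of order divisible by $4$) are never actually consumed in your argument except by a closing promise that they ``make such a relation available.'' A proof that does not visibly use its hypotheses at the decisive step has not located the decisive step. If you want to use this result, do what the paper does and cite Sumi; if you want to prove it, the missing certificate is the entire problem.
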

\begin{lemma}\label{lemma:Sumi2004}\cite{Sumi2004}[p.35, first paragraph]
If $G$ is a group which has a large subgroup of prime power order, then $G$ is not a gap group.
\end{lemma}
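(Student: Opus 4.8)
The plan is to argue by contradiction, exploiting the tension between the two defining requirements of a gap module. Suppose $G$ were a gap group, witnessed by an $\mathbb{R}G$-module $V$: for every $P<H\leq G$ with $P$ of prime power order one has $\dim V^P>2\dim V^H$, and for every $L\in\mathcal{L}(G)$ one has $\dim V^L=0$. Let $L$ denote the hypothesized large subgroup of prime power order. The whole point is that $L$ simultaneously triggers the ``zero fixed points'' clause (through its largeness) and, as a prime power subgroup, is eligible to sit on the left of the strict gap inequality.

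First I would dispose of the generic case $L\neq G$. Here $L$ is a \emph{proper} subgroup which is both large and of prime power order. Since $L$ is of prime power order it may play the role of $P$ in the gap inequality; taking $H=G\supsetneq L$ yields $\dim V^L>2\dim V^G\geq 0$, because fixed-point dimensions are nonnegative. Thus $\dim V^L>0$. On the other hand, $L\in\mathcal{L}(G)$ forces $\dim V^L=0$. This contradiction shows that no module $V$ can satisfy both clauses, so $G$ is not a gap group.

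It remains to treat the edge case $L=G$, in which $|G|$ is a power of a single prime $p$. Then $G/\{1\}$ is a $p$-group, so $\mathcal{O}^p(G)=\{1\}$, which means the trivial subgroup is itself large and is trivially of prime power order. Assuming $G$ is nontrivial, $\{1\}$ is then a \emph{proper} large subgroup of prime power order, and the argument of the previous paragraph applies verbatim with $L$ replaced by $\{1\}$ and $H=G$. The argument is essentially a one-line observation once the definitions are unwound, so there is no serious obstacle; the only point requiring care is precisely this edge case, where the large prime-power subgroup is all of $G$, and I resolve it by recording that in a $p$-group every subgroup — in particular $\{1\}$ — is large. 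Combining the two cases shows that whenever $G$ possesses a large subgroup of prime power order the defining inequalities of a gap module are mutually contradictory, whence $G$ is not a gap group.
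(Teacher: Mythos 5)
Your proof is correct: taking $P=L$ (or $P=\{1\}$ in the degenerate case where $L=G$ is a $p$-group) and $H=G$ in the gap inequality gives $\dim V^L>2\dim V^G\geq 0$, which directly contradicts $\dim V^L=0$ forced by $L\in\mathcal{L}(G)$. The paper offers no proof of its own here — it only cites Sumi — and the cited observation is exactly this one-line tension between the two clauses, so your argument is the intended one.
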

\begin{lemma}\label{lemma:sumi2001}\cite{Sumi2001}[pp. 982,984]
For any $n\geq 3$, the dihedral group $D_{2n}$ is not a gap group.
\end{lemma}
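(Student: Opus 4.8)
The plan is to show directly that $D_{2n}$ carries no $\mathbb{R}$-module satisfying the gap condition, by playing the L-free requirement ($\dim V^L=0$ for $L\in\mathcal{L}$) against the gap inequalities evaluated at the reflections. Write $D_{2n}=\langle a,b\mid a^n=b^2=1,\ bab=a^{-1}\rangle$ and put $n=2^{\alpha}m$ with $m$ odd. The first task is to identify the relevant large subgroup. The odd-order rotations generate the characteristic cyclic subgroup $C_m=\langle a^{2^{\alpha}}\rangle$, and $D_{2n}/C_m\cong D_{2^{\alpha+1}}$ is a $2$-group while no proper subgroup of $C_m$ yields a $2$-power-order quotient; hence $\mathcal{O}^2(D_{2n})=C_m$. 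Since the abelianization of $D_{2n}$ is an elementary abelian $2$-group, it has no nontrivial odd quotient, so $\mathcal{O}^p(D_{2n})=D_{2n}$ for every odd prime $p$. In particular $C_m$ is itself large, so a gap module $V$ must satisfy $\dim V^{C_m}=0$.

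Next I would dispose of the degenerate case $m=1$, i.e. $n$ a power of $2$. Then $D_{2n}$ is a $2$-group and $\mathcal{O}^2(D_{2n})=\{1\}$, so \emph{every} subgroup is large; a gap module would then have to satisfy $\dim V^{\{1\}}=0$, forcing $V=0$, which violates the strict inequality $\dim V^{\{1\}}>2\dim V^{G}$ for the valid pair $\{1\}<G$ (valid since $|D_{2n}|>1$). So no gap module exists when $n$ is a power of $2$.

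For the remaining case $m\ge 3$, suppose toward a contradiction that $V$ is a gap module, so $\dim V^{C_m}=0$ by the first paragraph. Consider the subgroup $D:=\langle a^{2^{\alpha}},b\rangle\cong D_{2m}$, which contains $C_m$ together with the reflections $ba^{2^{\alpha}k}$. Decomposing $V|_{D}$ into real irreducibles of $D_{2m}$, the only summands with a nonzero $C_m$-fixed subspace are the trivial and the sign representation (on both of which the rotation subgroup $C_m$ acts as the identity). Thus $\dim V^{C_m}=0$ forces $V|_{D}$ to be a sum of $2$-dimensional irreducibles, on each of which every reflection acts by a trace-zero orthogonal matrix; therefore $\chi_V(t)=0$ for each reflection $t\in D$. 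On the other hand, applying the gap inequality to the pair $\{1\}<\langle t\rangle$ (with $\langle t\rangle$ of prime power order $2$, the trivial subgroup counting as prime power order exactly as in the proof of Corollary~\ref{corollary:weakGapNpq2}) and using $\dim V^{\langle t\rangle}=\tfrac12(\dim V+\chi_V(t))$, the strict inequality $\dim V^{\{1\}}>2\dim V^{\langle t\rangle}$ reduces to $\chi_V(t)<0$. This contradicts $\chi_V(t)=0$, so no gap module exists and $D_{2n}$ is not a gap group.

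The main obstacle I anticipate is the bookkeeping in the first step together with the case split: one must pin down $\mathcal{O}^2(D_{2n})=C_m$ uniformly in $n$ so that $\dim V^{C_m}=0$ is correctly isolated as the operative L-free constraint, and separate off the genuinely different situation in which $D_{2n}$ is a $2$-group. The conceptual heart, however, is the clash produced in the last paragraph: the L-free condition forces the reflection traces to vanish on the odd dihedral subgroup $D\cong D_{2m}$, while the gap condition at $\{1\}<\langle t\rangle$ forces those same traces to be strictly negative.
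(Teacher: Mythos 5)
The paper offers no proof of this lemma at all --- it is imported from Sumi \cite{Sumi2001} as a black box --- so there is no in-text argument to compare yours against; what you have written is a correct, self-contained replacement. Your identification of the large subgroups is right: any $2$-group quotient of $D_{2n}$ kills the odd-order rotations, so $\mathcal{O}^2(D_{2n})=C_m$, and any odd-order quotient kills the image of $b$ and hence, via $bab^{-1}=a^{-1}$, the image of $a$, so $\mathcal{O}^p(D_{2n})=D_{2n}$ for odd $p$; thus $C_m\in\mathcal{L}(D_{2n})$ and $\dim V^{C_m}=0$ is the operative constraint. The $m=1$ case is handled correctly, and in the case $m\geq 3$ the restriction to $D\cong D_{2m}$ does exclude the trivial and sign summands, forcing every reflection of $D$ to act with trace zero, which clashes with the strict inequality $\dim V^{\{1\}}>2\dim V^{\langle t\rangle}$, equivalent to $\chi_V(t)<0$. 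The one point worth flagging is your reliance on the convention that the trivial subgroup counts as a subgroup of prime power order; this is consistent with how the paper itself applies the weak gap condition in the proof of Corollary \ref{corollary:weakGapNpq2} and with the definitions in Sumi's work, so it is not a gap, but it deserves the explicit remark you give it. Conceptually your argument is the same obstruction that underlies Theorem \ref{theorem:Sumi2012} --- an involution outside $\mathcal{O}^2(G)$ whose fixed-point dimension is forced up to half of $\dim V$ --- specialized and made elementary for dihedral groups, which is arguably more transparent for the reader than the bare citation the paper provides.
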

Now, we can prove the following.
\begin{lemma}\label{lemma:notGapSubgroups}
$N_{pq}^1$, $N_{2pq}$, $N_{pq^2}$ and $G_{p,q}$ are not gap groups.
\end{lemma}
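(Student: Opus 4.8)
The plan is to treat the four subgroups one at a time, matching each to the appropriate result of Sumi according to the shape of its family of large subgroups; three of the cases are immediate, and the only real work is the case $G=G_{p,q}$ itself, which requires Theorem~\ref{theorem:Sumi2012}.

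First I would dispatch $N_{pq}^1\cong C_{pq}$ and $N_{pq^2}$ by producing a large subgroup of prime power order and appealing to Lemma~\ref{lemma:Sumi2004}. For the cyclic group $C_{pq}\cong C_p\times C_q$, the subgroup $\mathcal{O}^p(C_{pq})$ is the Sylow $q$-subgroup $C_q$, which is large and of prime power order. For $N_{pq^2}$, Lemma~\ref{lemma:largeNpq2} already identifies $\mathcal{O}^q(N_{pq^2})=N_p\cong C_p$ as a large subgroup of prime power order $p$. In each case Lemma~\ref{lemma:Sumi2004} immediately gives that the group is not a gap group. For $N_{2pq}\cong D_{2pq}$, note that $n=pq\geq 15\geq 3$, so Lemma~\ref{lemma:sumi2001} applies verbatim.

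For $G_{p,q}$ the large subgroups are exactly $N_{2pq}$, $N_{pq^2}$ and $G_{p,q}$ by Corollary~\ref{corollary:largeGpq}, all of non-prime-power order, so Lemma~\ref{lemma:Sumi2004} is silent and I must instead verify every hypothesis of Theorem~\ref{theorem:Sumi2012}. Four of these are routine book-keeping from the earlier sections: there is no large subgroup of prime power order (Corollary~\ref{corollary:largeGpq}); $\mathcal{O}^2(G_{p,q})=N_{pq^2}$ has index $2$ since $|G_{p,q}|=2pq^2$; among odd primes only $q$ gives $\mathcal{O}^q(G_{p,q})=N_{2pq}\neq G_{p,q}$ whereas $\mathcal{O}^p(G_{p,q})=G_{p,q}$, so $p_0=q$ is the required unique odd prime; and Table~\ref{table:classesGpq} shows the element orders are confined to $\{1,2,p,q,2q,pq\}$, none divisible by $4$.

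The crux of the argument, and the only nontrivial computation, is the centralizer condition: I must exhibit an involution $g\notin\mathcal{O}^2(G_{p,q})=N_{pq^2}$ satisfying $2|\mathcal{O}^2(C_{G_{p,q}}(g))|\geq|C_{G_{p,q}}(g)|$. I would take $g=B=(b,1)$, the representative of the unique conjugacy class of involutions, which lies outside $N_{pq^2}=\{(a^l,c^m)\}$ because that subgroup contains no element with a $b$-component. Since the class $(B)$ has size $pq$ by Table~\ref{table:classesGpq}, the centralizer has order $2pq^2/(pq)=2q$; and as the relation $cbc^{-1}=b$ makes every $(b^{\varepsilon},c^m)$ commute with $B$, the centralizer is precisely $\langle b\rangle\times\langle c\rangle\cong C_{2q}$. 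Consequently $\mathcal{O}^2(C_{G_{p,q}}(B))=\langle c\rangle$ is of order $q$, whence $2|\mathcal{O}^2(C_{G_{p,q}}(B))|=2q=|C_{G_{p,q}}(B)|$ and the inequality holds (with equality). With all hypotheses in place, Theorem~\ref{theorem:Sumi2012} yields that $G_{p,q}$ is not a gap group, completing the lemma.
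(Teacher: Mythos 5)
Your proposal is correct and follows essentially the same route as the paper: Lemma \ref{lemma:Sumi2004} for $N_{pq}^1$ and $N_{pq^2}$ via a large subgroup of prime power order, Lemma \ref{lemma:sumi2001} for $N_{2pq}\cong D_{2pq}$, and Theorem \ref{theorem:Sumi2012} applied to the involution $(b,1)$ with centralizer $C_{2q}$ for $G_{p,q}$. The only cosmetic difference is that for $N_{pq}^1$ you exhibit $\mathcal{O}^p(C_{pq})=C_q$ where the paper uses $\mathcal{O}^q(N_{pq}^1)=N_p$; both serve equally well.
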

\begin{proof}
Let us prove that $G_{p,q}$ is not a gap group by means of Theorem \ref{theorem:Sumi2012}. By Corollary \ref{corollary:largeGpq} and the fact that $G_{p,q}$ does not have an element of order divisible by $4$, it suffices to show that there exists an element $g\in G$ of order $2$ not belonging to $\mathcal{O}^2(G)=N_{pq^2}$ such that $2|\mathcal{O}^2(C_G(g))|\geq |C_G(g)|$. We show that this holds for $g=(b,1)$. We have
\begin{eqnarray*}
(b^{\varepsilon}a^l,c^m)(b,1)=(b,1)(b^{\varepsilon}a^l,c^m)\Leftrightarrow (b^{\varepsilon}a^lb,c^m)=(b^{1+\varepsilon}a^l,c^m)
\end{eqnarray*}
which holds if and only if $l=0$. Thus $C_{G_{p,q}}(g)=\{(b^{\varepsilon},c^m)|\varepsilon=0,1,m=0,...,q-1\}\cong C_{2q}$. Obviously $\mathcal{O}^2(C_{2q})\cong C_q$ and therefore the inequality $2|\mathcal{O}^2(C_G(g))|\geq |C_G(g)|$ holds. Hence $G_{p,q}$ is not a gap group.

Note that both $N_{pq}^1$ and $N_{pq^2}$ contain $N_p$ as a normal subgroup (since $N_p\trianglelefteq G_{p,q}$). Thus $\mathcal{O}^q(N_{pq}^1)=\mathcal{O}^q(N_{pq^2})=N_p$ and $N_p$ is a large subgroup for both $N_{pq}^1$ and $N_{pq^2}$. Hence, we get from Lemma \ref{lemma:Sumi2004} that $N_{pq}^1$ and $N_{pq^2}$ are not gap groups.

The statement for $N_{2pq}$ is the direct corollary from Lemma \ref{lemma:sumi2001}.
\end{proof}
\begin{lemma}\label{lemma:notGNil}
$G_{p,q}$ has no cyclic quotient of odd composite order and $G_{p,q}$ does not satisfy the Sumi $G_{p,q}^{\rm nil}$-condition.
\end{lemma}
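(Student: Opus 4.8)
The plan is to treat the two assertions separately, relying on the structural information already gathered about the normal subgroups and quotients of $G_{p,q}$. For the first assertion I would argue by orders: by Lemma \ref{lemma:ordersNormalSubgroups} every normal subgroup $N\trianglelefteq G_{p,q}$ satisfies $|N|\in\{1,p,q,pq,2pq,pq^2,2pq^2\}$, so a quotient $G_{p,q}/N$ of odd composite order $n$ (necessarily an odd composite divisor of $|G_{p,q}|=2pq^2$, whence $n\in\{pq,q^2,pq^2\}$) would force $|N|\in\{2q,2p,2\}$, none of which occurs. Consequently no quotient at all has odd composite order, and in particular no cyclic one does. Equivalently, this can be read off directly from Lemma \ref{lemma:quotientsGpq}: the only cyclic quotients of $G_{p,q}$ are $1$, $C_2\cong G_{p,q}/N_{pq^2}$, $C_q\cong G_{p,q}/N_{2pq}$ and $C_{2q}\cong G_{p,q}/N_{pq}^1$, whose orders $1,2,q,2q$ are never odd and composite.

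For the second assertion I first pin down $G_{p,q}^{\rm nil}$. Running through the normal subgroups listed in Lemma \ref{lemma:uniqueNormal} and the corresponding quotients in Lemma \ref{lemma:quotientsGpq}, the nilpotent quotients $G_{p,q}/N$ are exactly those with $N\in\{N_{pq}^1,N_{2pq},N_{pq^2},G_{p,q}\}$, since $C_{2q}$, $C_q$ and $C_2$ are abelian while $C_q\times D_{2q}$, $D_{2q}$ and the order-$2pq$ quotient $G_{p,q}/N_q$ are not nilpotent (and $G_{p,q}$ itself is not, having the non-nilpotent quotient $G_{p,q}/N_p$). As $N_{pq}^1\cong C_{pq}$ is contained in each of $N_{2pq}$, $N_{pq^2}$ and $G_{p,q}$, it is the smallest such normal subgroup, so $G_{p,q}^{\rm nil}=N_{pq}^1$.

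Next I would locate the cosets of $G_{p,q}^{\rm nil}=N_{pq}^1$ and the elements of composite order. Since $N_{pq}^1=\{(a^l,1)\}$ is normal of index $2q$, a short computation in the semidirect product shows that the coset of $(b^\varepsilon a^l,c^m)$ depends only on the pair $(\varepsilon,m)$, giving the $2q$ cosets of $G_{p,q}/N_{pq}^1\cong C_{2q}$. By Table \ref{table:classesGpq} the only elements whose order is divisible by two distinct primes are the $A_l=(a^l,1)$ (order $pq$, lying in the trivial coset) and the $B_m=(b,c^m)$ (order $2q$, lying in the coset indexed by $(1,m)$). Hence two composite-order elements can share a $G_{p,q}^{\rm nil}$-coset only if both are of $A$-type, or both are of $B$-type with the same $m$.

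Finally I would rule out each clause of the Sumi $G_{p,q}^{\rm nil}$-condition in these two cases. In the $B$-type case the relevant coset equals the full conjugacy class $(B_m)$ (case (1) of the conjugacy class computation), so any two such elements are conjugate, hence real conjugate, violating the requirement that $a,b$ be not real conjugate. In the $A$-type case both elements have odd order $pq$, so the first clause (even orders with conjugate involutions) fails at once; for the second clause I observe that an element of order $pq$ generates $N_{pq}^1$, so any subgroup containing it contains $N_{pq}^1$, and the subgroups above $N_{pq}^1$ correspond to the subgroups of $C_{2q}$, namely $N_{pq}^1$, $N_{2pq}$, $N_{pq^2}$ and $G_{p,q}$. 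By Lemma \ref{lemma:notGapSubgroups} none of these is a gap group, so no gap subgroup contains the $A_l$ and the second clause fails as well. Thus no admissible pair exists and $G_{p,q}$ does not satisfy the Sumi $G_{p,q}^{\rm nil}$-condition. The main obstacle is the bookkeeping identifying $G_{p,q}^{\rm nil}$ and verifying that the subgroups lying above it are precisely the four already proven non-gap in Lemma \ref{lemma:notGapSubgroups}; once that is in place the condition collapses.
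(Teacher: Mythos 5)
Your proof is correct and follows essentially the same route as the paper: both identify $G_{p,q}^{\rm nil}=N_{pq}^1$ from the list of normal subgroups and quotients, observe that composite-order elements sharing a coset of $N_{pq}^1$ are either both of type $B_m$ (hence conjugate, so real conjugate) or both of type $A_l$ (odd order, generating $N_{pq}^1$, so contained only in the four non-gap subgroups of Lemma \ref{lemma:notGapSubgroups}), and deduce the first assertion from the classification of normal subgroups. Your order-counting argument for the first claim and the explicit coset bookkeeping merely spell out details the paper leaves implicit.
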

\begin{proof}
It follows by Lemma \ref{lemma:uniqueNormal} and Lemma \ref{lemma:quotientsGpq} that $G_{p,q}$ has no cyclic quotient of odd composite order. It follows by Lemma \ref{lemma:quotientsGpq} that $G_{p,q}^{\rm nil}=N^1_{pq}$. Assume $xN^1_{pq}=yN_{pq}^1$ for some elements $x,y\in G_{p,q}$ of even order. This means that $x=(ba^l,c^m)$ and $y=(ba^{l'},c^{m'})$ for some $l,l'=0,...,pq-1$ and $m,m'=0,...,q-1$ and
$$
xy^{-1}=(a^{l'i^{m-m'}-l},c^{m-m'})\in N_{pq}^1.
$$
Thus $m'=m$ and $(x)=(y)$ by Table \ref{table:classesGpq}.

Suppose there exist $x',y'\in G_{p,q}$ of composite order such that one of them, say $x'$, is of odd order. Then $x'\in N_{pq}^1$ by Table \ref{table:classesGpq}. Thus, the only subgroups of $G_{p,q}$ which can contain both $x'$ and $y'$ must have $N_{pq}^1$ as a subgroup. These subgroups are precisely $N_{pq}^1$, $N_{2pq}$, $N_{pq^2}$ and $G_{p,q}$. We showed in Lemma \ref{lemma:notGapSubgroups} that they are not gap groups. This shows that $G_{p,q}$ does not satisfy the Sumi $G_{p,q}^{\rm nil}$-condition.
\end{proof}
\begin{lemma}\label{lemma:normalNorCyclicQuotients}
$N_{pq^2}$ has no normal subgroup $P$ of prime power order such that the quotient $N_{pq^2}/P$ is cyclic. The same statement holds for $N_{2pq}$.
\end{lemma}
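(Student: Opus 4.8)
The plan is to exploit the commutator (derived) subgroups of $N_{pq^2}$ and $N_{2pq}$. The guiding observation is that if a quotient $N/P$ is cyclic, then it is in particular abelian, so $[N,N]\le P$; this containment imposes a divisibility constraint on $|P|$ that is essentially incompatible with $P$ having prime-power order.

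I would dispose of $N_{2pq}\cong D_{2pq}$ first, since it is immediate. Because $n=pq$ is odd, the standard computation $[b,a]=a^{-2}$ together with $\langle a^2\rangle=\langle a\rangle$ yields $[N_{2pq},N_{2pq}]=\langle a\rangle\cong C_{pq}$, a subgroup of composite order $pq$. If $N_{2pq}/P$ were cyclic, then $C_{pq}=[N_{2pq},N_{2pq}]\le P$, forcing $pq\mid|P|$. But $pq$ is a product of two distinct primes and hence divides no prime power, a contradiction. Thus no prime-power $P$ produces a cyclic quotient.

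For $N_{pq^2}$ I would use the isomorphism $N_{pq^2}\cong C_q\times F_{p,q}$ of Lemma \ref{lemma:isom}. Since the $C_q$ factor is central and $[F_{p,q},F_{p,q}]=N_p\cong C_p$ (the complement generator $\gamma$ acts on $\beta$ by $\beta\mapsto\beta^i$ with $i\not\equiv 1\Mod{p}$, so $\beta^{i-1}$ generates $C_p$), the derived subgroup is exactly $N_p$, of order $p$. Now suppose $P\trianglelefteq N_{pq^2}$ has prime-power order with $N_{pq^2}/P$ cyclic, hence abelian; then $N_p=[N_{pq^2},N_{pq^2}]\le P$, so $p\mid|P|$. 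As $|P|$ is a prime power dividing $pq^2$, in which $p$ occurs to the first power, this forces $|P|=p$ and therefore $P=N_p$. It then remains to rule out that $N_{pq^2}/N_p$ is cyclic: I would observe that $N_{pq^2}/N_p\cong C_q\times(F_{p,q}/C_p)\cong C_q\times C_q$, which is not cyclic. Equivalently, $N_{pq^2}$ has no element of order $q^2$, so neither does any of its quotients, and a cyclic group of order $q^2$ is thereby excluded. (Normal subgroups $P$ of order $q^2$ need not be considered, since the proof of Lemma \ref{lemma:largeNpq2} already shows that $N_{pq^2}$ has none.) This contradiction completes the argument.

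The two commutator computations are routine; the one genuinely case-specific step, and the main obstacle, is the $N_{pq^2}$ case. There the derived subgroup $N_p$ has only prime order and therefore \emph{does} fit inside a prime-power subgroup, so the argument does not terminate at the divisibility step as it does for $D_{2pq}$. Instead one must pin down $P=N_p$ and then separately verify that the resulting quotient of order $q^2$ is the noncyclic group $C_q\times C_q$ rather than $C_{q^2}$.
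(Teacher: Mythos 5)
Your proof is correct, but it follows a genuinely different route from the paper. The paper proceeds by brute force: it lists the prime powers dividing $|N_{pq^2}|$ and $|N_{2pq}|$, identifies the possible normal subgroups $P$ of each such order using the conjugacy class data (Table \ref{table:classesGpq} and the proof of Lemma \ref{lemma:largeNpq2}), and then checks directly that each candidate quotient fails to be cyclic by examining the orders of coset representatives. You instead observe that a cyclic quotient is in particular abelian, so $P$ must contain the derived subgroup; computing $[N_{2pq},N_{2pq}]=\langle (a,1)\rangle\cong C_{pq}$ kills the dihedral case instantly because $pq$ divides no prime power, and computing $[N_{pq^2},N_{pq^2}]=N_p$ (via the decomposition $N_{pq^2}\cong C_q\times F_{p,q}$ of Lemma \ref{lemma:isom} and the fact that $i\not\equiv 1\Mod{p}$) pins down $P=N_p$, after which $N_{pq^2}/N_p\cong C_q\times C_q$ is visibly noncyclic. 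Your argument is shorter and more conceptual, avoids the case-by-case inspection of conjugacy classes entirely, and in the $N_{2pq}$ case sidesteps the paper's separate treatment of $|P|=p$ and $|P|=q$; what the paper's approach buys in exchange is that its enumeration of normal subgroups is already needed elsewhere (Lemmas \ref{lemma:ordersNormalSubgroups} and \ref{lemma:uniqueNormal}), so it reuses machinery rather than introducing the commutator computation. Your closing observation that $N_{pq^2}$ has no element of order $q^2$, so no quotient of order $q^2$ can be cyclic, is a valid alternative to identifying the quotient as $C_q\times C_q$, and the parenthetical appeal to the nonexistence of normal subgroups of order $q^2$ is not actually needed, since the divisibility constraint $p\mid |P|$ already excludes that case.
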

\begin{proof}
Suppose that $P\trianglelefteq N_{pq^2}$ and $N_{pq^2}/P$ is cyclic. Then $|P|\in\{1,p,q,q^2\}$. If $|P|=1$, then $N_{pq^2}/P\cong N_{pq^2}$ and we obtain a contradiction. The case $|P|=q^2$ is not possible by the proof of Lemma \ref{lemma:largeNpq2}. Let $|P|=q$ and $N_{pq^2}/P=\langle(a^l,c^m)P\rangle$. We know from the proof of Lemma \ref{lemma:quotientsGpq} that $(a^l,c^m)^q=(1,1)$. Thus $|N_{pq^2}/P|\leq q$ which is a contradiction since $|N_{pq^2}/P|=pq$. Therefore $|P|=p$. In this case, however, it follows from Lemma \ref{lemma:isom} and Table \ref{table:charactersNpq2} that $P=N_p$. Suppose $N_{pq^2}/P=\langle(a^l,c^m)P\rangle$. As before, $|N_{pq^2}/P|\leq q$ which is not possible.

Assume that there exists $P\trianglelefteq N_{2pq}$ of prime power order such that $N_{2pq}/P$ is cyclic. Then $|P|\in\{1,2,p,q\}$. Obviously, $P$ cannot be the trivial subgroup and, since there is no normal subgroup of order $2$ in $N_{2pq}$, it follows that $|P|\in\{p,q\}$. This means that $P$ is a subgroup of $N_{pq}^1$. If $|P|=p$, then $P=\{(a^{qs},1)|s=0,...,p-1\}$. Since $|(ba^l,1)|=2$ for any $l=0,...,pq-1$, it follows that $N_{2pq}/P=\langle (a^l,1)P\rangle$. Suppose $(a^l,1)^nP=(ba^{l'},1)P$ for some $n\geq 0$ and $l'=0,...,pq-1$. This means that $(ba^{l'-l},1)=(ba^{l'},1)(a^l,1)^{-1}\in P$. A contradiction which implies that $N_{2pq}/P$ is not cylic. The case $|P|=q$ is analogous.
\end{proof}
\begin{lemma}\label{lemma:specialOliver}
$G_{p,q}$ is a special Oliver group.
\end{lemma}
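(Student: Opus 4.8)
The plan is to check, one at a time, the five conditions defining a special Oliver group for $G = G_{p,q}$: that it is solvable, of even order, an Oliver group, that each of its cyclic quotients is of even or prime power order, and that it does not satisfy the Sumi $G_{p,q}^{\rm nil}$-condition. Most of the content is already packaged in the preceding lemmas, so the proof amounts to collecting them and disposing of the Oliver condition.

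First I would record the two easy points. Since $N_{pq}^1 \cong C_{pq}$ is an abelian normal subgroup with abelian quotient $G/N_{pq}^1 \cong C_{2q}$ (Lemmas \ref{lemma:uniqueNormal} and \ref{lemma:quotientsGpq}), $G$ is metabelian, hence solvable; and $|G| = 2pq^2$ is even.

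Next I would verify the Oliver property by testing its definition directly: there must be no chain $P \trianglelefteq H \trianglelefteq G$ with $P$ and $G/H$ of prime power order and $H/P$ cyclic. The admissible $H$ are the normal subgroups with $G/H$ of prime power order. Running through the list of proper normal subgroups in Lemma \ref{lemma:uniqueNormal} and computing the quotients via Lemma \ref{lemma:quotientsGpq}, the quotients by $N_{pq}^1$, $N_{pq}^2$, $N_p$, $N_q$ and the trivial subgroup have orders $2q$, $2q$, $2q^2$, $2pq$ and $2pq^2$, none a prime power; so the only admissible $H$ are the large subgroups $N_{2pq}$, $N_{pq^2}$ and $G$ of Corollary \ref{corollary:largeGpq}. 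For $H = N_{pq^2}$ and $H = N_{2pq}$, the absence of a prime power normal $P$ with $H/P$ cyclic is exactly Lemma \ref{lemma:normalNorCyclicQuotients}. For $H = G$ the only normal subgroups of prime power order are $1$, $N_p$ and $N_q$, and none of $G$, $G/N_p \cong C_q \times D_{2q}$, $G/N_q$ is cyclic—the last is not even nilpotent—again by Lemma \ref{lemma:quotientsGpq}. This exhausts the cases, so $G$ is an Oliver group.

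For the remaining two conditions I would read the cyclic quotients off the same lemmas: the nontrivial cyclic quotients are $G/N_{pq}^1 \cong C_{2q}$ and $G/N_{pq^2} \cong C_2$, both even, and $G/N_{2pq} \cong C_q$, a prime power, while $G/N_{pq}^2 \cong D_{2q}$, $G/N_p \cong C_q \times D_{2q}$ and $G/N_q$ are noncyclic; equivalently, any cyclic quotient that is neither even nor of prime power order would have odd composite order, which is ruled out by the first half of Lemma \ref{lemma:notGNil}. The failure of the Sumi $G_{p,q}^{\rm nil}$-condition is the second half of Lemma \ref{lemma:notGNil}. Assembling the five points finishes the proof. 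I expect the only step with real content to be the Oliver property, and even there the difficulty is confined to confirming that the candidate list of $H$'s is complete; the delicate nonexistence statements have already been isolated in Lemma \ref{lemma:normalNorCyclicQuotients}, so the argument reduces to bookkeeping over the normal-subgroup lattice.
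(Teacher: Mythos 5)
Your proposal is correct and follows essentially the same route as the paper: solvability and even order are immediate, the Oliver property is reduced via Lemmas \ref{lemma:ordersNormalSubgroups} and \ref{lemma:uniqueNormal} to the three candidates $H\in\{N_{2pq},N_{pq^2},G_{p,q}\}$, which are then disposed of by Lemmas \ref{lemma:quotientsGpq} and \ref{lemma:normalNorCyclicQuotients}, and the remaining two conditions are exactly Lemma \ref{lemma:notGNil}. The only cosmetic differences are your metabelian argument for solvability (the paper cites closure of solvability under semidirect products) and your direct computation of the quotient orders in place of invoking Lemma \ref{lemma:ordersNormalSubgroups}.
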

\begin{proof}
Obviously, $G_{p,q}$ is not of odd order. Since $D_{2pq}$ and $C_q$ are solvable groups, it follows that $G_{p,q}$, as the semidirect product of $D_{2pq}$ and $C_{q}$, is solvable as well. Moreover, by Lemma \ref{lemma:notGNil}, we know that $G_{p,q}$ has no cyclic quotient of odd composite order and does not satisfy the Sumi $G_{p,q}^{\rm nil}$-condition. Thus, we only have to show that $G_{p,q}$ is an Oliver group. Suppose for the converse that this is not true. Then, there exist subgroups $P\trianglelefteq H\trianglelefteq G_{p,q}$ such that $G_{p,q}/H$ and $P$ are of prime power orders and $H/P$ is cyclic. Then, by Lemma \ref{lemma:ordersNormalSubgroups}, $|H|\in\{2pq,pq^2,2pq^2\}$ and thus, by Lemma \ref{lemma:uniqueNormal}, $H\in\{N_{2pq},N_{pq^2},G_{p,q}\}$. However, by Lemmas \ref{lemma:uniqueNormal}, \ref{lemma:quotientsGpq} and \ref{lemma:normalNorCyclicQuotients}, it follows that neither of the groups $N_{2pq}$, $N_{pq^2}$ and $G_{p,q}$ has a normal subgroup of prime power order such that the quotient by it is cyclic. This concludes the proof.
\end{proof}
\section{When $\Ind^G_H\colon\PO(H)\rightarrow\PO(G)$ is a monomorphism?}
Assume $H$ is a subgroup of a group $G$ and consider the induction homomorphism $$\Ind_H^G\colon\RO(H)\rightarrow\RO(G),\; U-V\mapsto \Ind^G_H(U)-\Ind^G_H(V).$$ 
\begin{theorem}\label{theorem:inducedCharacterFormula}\cite[21.23. Theorem]{Liebeck2001}
Let $\chi$ be a character of $H$ and $g\in G$. Then, we have two possibilities.
\begin{enumerate}[(1)]
    \item if $H\cap (g)=\emptyset$, then $\Ind_H^G(\chi)(g)=0$.
    \item if $H\cap (g)\neq\emptyset$, then
    $$
    \Ind_{H}^G(\chi)(g)=|C_G(g)|\Big(\frac{\chi(h_1)}{|C_H(h_1)|}+\ldots+\frac{\chi(h_m)}{|C_H(h_m)|}\Big),
    $$
\end{enumerate}
where $C_K(x)$ denotes the centralizer of the element $x$ of the group $K$ and $h_1,\ldots,h_m$ are the representatives of all the distinct conjugacy classes in $H$ of the elements of the set $H\cap (g)$.
\end{theorem}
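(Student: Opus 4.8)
The statement is the standard formula for the character of an induced representation, so the plan is to derive it from the primitive summation form of the induced character and then repackage that sum according to the conjugacy classes of $H$.

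First I would recall the basic induced character formula, which follows directly from the definition $\Ind_H^G(W)=\mathbb{C}G\otimes_{\mathbb{C}H}W$ (equivalently, by extending $\chi$ to the class function on $G$ that vanishes off $H$ and averaging over the group): for any character $\chi$ of $H$ and any $g\in G$,
$$\Ind_H^G(\chi)(g)=\frac{1}{|H|}\sum_{\substack{x\in G\\ x^{-1}gx\in H}}\chi(x^{-1}gx).$$
From this, the first case is immediate: if $H\cap(g)=\emptyset$, meaning no $G$-conjugate of $g$ lies in $H$, then the condition $x^{-1}gx\in H$ is never satisfied, the sum is empty, and $\Ind_H^G(\chi)(g)=0$.

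For the second case, the key step is to reorganize the sum by the $H$-conjugacy class of the summation variable $x^{-1}gx$. Since $\chi$ is a class function on $H$, its value depends only on that class. The elements $x^{-1}gx$ that can occur are exactly the points of $H\cap(g)$, which partition into $H$-conjugacy classes with chosen representatives $h_1,\dots,h_m$. The counting lemma I would establish is that for any $h$ lying in $(g)$, the set $\{x\in G:x^{-1}gx=h\}$ is a single right coset of $C_G(g)$, hence has exactly $|C_G(g)|$ elements: if $x_0^{-1}gx_0=h$, then $x^{-1}gx=h$ holds precisely when $xx_0^{-1}\in C_G(g)$. Summing over the $H$-class of $h_j$, which has $|H|/|C_H(h_j)|$ members, the number of $x$ with $x^{-1}gx$ in that class equals $|C_G(g)|\cdot|H|/|C_H(h_j)|$, each contributing the value $\chi(h_j)$.

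Assembling these counts yields
$$\Ind_H^G(\chi)(g)=\frac{1}{|H|}\sum_{j=1}^{m}|C_G(g)|\frac{|H|}{|C_H(h_j)|}\chi(h_j)=|C_G(g)|\sum_{j=1}^{m}\frac{\chi(h_j)}{|C_H(h_j)|},$$
which is the claimed formula. The step I expect to demand the most care is the coset-counting argument: one must verify that distinct $H$-classes contribute disjoint families of $x$, that every admissible $x$ is counted exactly once, and that the factor $|C_G(g)|$ is common to all representatives because each $h_j$ is $G$-conjugate to $g$. Everything else is bookkeeping built on the class equation.
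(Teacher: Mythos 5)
Your proof is correct, and there is nothing in the paper to compare it against: the paper states this result without proof, citing it directly from James and Liebeck \cite[21.23. Theorem]{Liebeck2001}. Your derivation --- starting from $\Ind_H^G(\chi)(g)=\tfrac{1}{|H|}\sum_{x^{-1}gx\in H}\chi(x^{-1}gx)$, disposing of case (1) as an empty sum, and regrouping case (2) by $H$-conjugacy classes via the observation that $\{x\in G: x^{-1}gx=h\}$ is a right coset of $C_G(g)$ --- is exactly the standard textbook argument for this formula, and all the bookkeeping (disjointness of the fibres over distinct $H$-classes, the common factor $|C_G(g)|$ since each $h_j$ is $G$-conjugate to $g$) checks out.
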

The above theorem shows that we have a well-defined restriction
$$
    \Ind^G_H\colon\PO(H)\rightarrow\PO(G).
    $$
Let $s$ denote the number of real conjugacy classes of $G$ which have nonzero intersection with $H$ and whose elements are not of prime power order. Put $t=\prim(H)$ and let $m$ be the number of real conjugacy classes of $G$. Obviously, $s\leq t$. 

Consider the image $\Ima(\Ind_H^G\colon\PO(H)\rightarrow\PO(G))$. Clearly, it is a torsion-free subgroup of $\PO(G)$ and it has a well-defined rank as the minimum $\geq 0$ such that $\Ima(\Ind_H^G\colon\PO(H)\rightarrow\PO(G))\cong\mathbb{Z}^r$. Thus, $\Ind_H^G\colon\PO(H)\rightarrow\PO(G)$ is a monomorphism if and only if $r=t$.

If $A$ is a matrix with entries in the field $K$, then we denote by $\rank_K(A)$ the rank of $A$ over $K$. 
\begin{lemma}\label{lemma:trace}
Assume $A\in\GL(n,\mathbb{C})$ is of finite order. Then $\tr(A^{-1})=\overline{\tr(A)}$.
\end{lemma}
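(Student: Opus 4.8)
The plan is to reduce the statement to an elementary computation with eigenvalues, exploiting the fact that the eigenvalues of a finite-order matrix are roots of unity and hence lie on the unit circle, where inversion and complex conjugation coincide.

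First I would record that since $A$ has finite order, say $A^k=I$ for some positive integer $k$, the minimal polynomial of $A$ divides $x^k-1$. As $x^k-1$ has $k$ distinct roots over $\mathbb{C}$, the minimal polynomial of $A$ has no repeated roots, so $A$ is diagonalizable over $\mathbb{C}$. Let $\lambda_1,\ldots,\lambda_n$ denote the eigenvalues of $A$, listed with multiplicity. Each $\lambda_j$ satisfies $\lambda_j^k=1$, so $\lambda_j$ is a $k$-th root of unity; in particular $|\lambda_j|=1$, and for a complex number on the unit circle one has $\lambda_j^{-1}=\overline{\lambda_j}$. Since the eigenvalues of $A^{-1}$ are precisely $\lambda_1^{-1},\ldots,\lambda_n^{-1}$, again with multiplicity, this would yield
$$
\tr(A^{-1})=\sum_{j=1}^{n}\lambda_j^{-1}=\sum_{j=1}^{n}\overline{\lambda_j}=\overline{\sum_{j=1}^{n}\lambda_j}=\overline{\tr(A)}.
$$

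There is no genuine obstacle in this argument; the only point that merits a line of justification is the passage to eigenvalues, namely the claim that $A$ is diagonalizable. If one prefers to sidestep diagonalizability altogether, an equivalent route is to invoke Schur's theorem to triangularize $A$: writing $A=P^{-1}TP$ with $T$ upper triangular, the diagonal of $T$ consists of the eigenvalues of $A$ (with algebraic multiplicity), and $A^{-1}=P^{-1}T^{-1}P$ with $T^{-1}$ upper triangular carrying the inverse eigenvalues on its diagonal; the computation above then goes through verbatim using that each eigenvalue is a root of unity.
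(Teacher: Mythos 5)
Your argument is correct: finite order forces the minimal polynomial to divide $x^k-1$, hence $A$ is diagonalizable with eigenvalues that are roots of unity, and on the unit circle inversion equals conjugation, so the trace identity follows by summing eigenvalues. The paper states this lemma without proof (it is treated as a standard fact), and your proof is exactly the canonical argument one would supply, so there is nothing to reconcile.
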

\begin{lemma}\label{lemma:boundForRank}
The rank of $\Ima(\Ind_H^G\colon\PO(H)\rightarrow\PO(G))$ is at most $s$, that is $r\leq s$. 
\end{lemma}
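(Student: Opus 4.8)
The plan is to reformulate everything in terms of characters and then recognise $r$ as the complex rank of a matrix of induced–character values which, by the induced character formula, has only $s$ possibly–nonzero rows.

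First I would identify the groups $\PO(H)$ and $\PO(G)$ with lattices of virtual characters. Since characters determine $\mathbb{R}K$-modules up to isomorphism, two $\mathbb{R}H$-modules are $\mathcal{P}$-matched exactly when their characters agree on every element of prime power order (a prime–power subgroup consists of such elements, and the value of a character at $x$ depends only on $\langle x\rangle$). Hence an element of $\PO(H)$ is precisely a virtual real character of $H$ vanishing on all elements of prime power order, and likewise for $\PO(G)$. Fix a $\mathbb{Z}$-basis $\psi_1,\dots,\psi_t$ of $\PO(H)$, so that $\Ima(\Ind_H^G)$ is generated by $\Ind_H^G(\psi_1),\dots,\Ind_H^G(\psi_t)$, and let $g_1,\dots,g_m$ be representatives of the real conjugacy classes of $G$. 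I would then argue that the (torsion-free) rank $r$ of $\Ima(\Ind_H^G)$ equals $\rank_{\mathbb{C}}(M)$, where $M$ is the $m\times t$ matrix with entries $M_{jk}=\Ind_H^G(\psi_k)(g_j)$. This identity is the one genuinely delicate point: it is \emph{not} enough that the $\Ind_H^G(\psi_k)$ be $\mathbb{Z}$-independent, and one must invoke that distinct irreducible real characters are $\mathbb{C}$-linearly independent as class functions (their expansions into complex irreducibles have pairwise disjoint supports), so that the character map embeds $\PO(G)\otimes_{\mathbb{Z}}\mathbb{C}$ into the space of class functions and evaluation on the real classes $g_1,\dots,g_m$ is injective there; Lemma \ref{lemma:trace} guarantees that these characters are inversion-invariant, so they are determined by their values on the $m$ real classes.

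Next I would bound the number of nonzero rows of $M$. Fix $k$ and $g\in G$. By Theorem \ref{theorem:inducedCharacterFormula}, $\Ind_H^G(\psi_k)(g)=0$ whenever $(g)\cap H=\emptyset$; and when $(g)\cap H\neq\emptyset$ the value is a combination of the numbers $\psi_k(h_i)$ with $h_i\in(g)\cap H$, each of which has the same order as $g$. If $g$ has prime power order, then every $h_i$ does too, so $\psi_k(h_i)=0$ and the induced value vanishes. Thus $\Ind_H^G(\psi_k)(g)$ can be nonzero only when $g$ has non–prime–power order and $(g)\cap H\neq\emptyset$. Since $\Ind_H^G(\psi_k)$ is real-valued and, by Lemma \ref{lemma:trace}, satisfies $\Ind_H^G(\psi_k)(g)=\Ind_H^G(\psi_k)(g^{-1})$, it is constant on real conjugacy classes, so its nonzero values occur only on the real conjugacy classes of $G$ consisting of non–prime–power elements meeting $H$. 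By definition there are exactly $s$ such classes, so $M$ has at most $s$ nonzero rows.

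Finally, a matrix with at most $s$ nonzero rows has $\rank_{\mathbb{C}}\leq s$, whence $r=\rank_{\mathbb{C}}(M)\leq s$. I expect the main obstacle to lie entirely in the first paragraph, namely in justifying the identity $r=\rank_{\mathbb{C}}(M)$: this is where the integral structure must be reconciled with linear algebra over $\mathbb{C}$, using the linear independence of the real irreducible characters rather than merely their $\mathbb{Z}$-independence. Once that identification is established, the induced character formula makes the count of nonzero rows essentially automatic, and the bound $r\leq s$ follows at once.
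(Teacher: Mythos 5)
Your proof is correct and follows essentially the same route as the paper's: both reduce the statement to the rank of the matrix of induced-character values on real conjugacy classes, invoke Theorem \ref{theorem:inducedCharacterFormula} to see that at most $s$ rows can be nonzero, and then reconcile the torsion-free rank $r$ of the image with the linear-algebraic rank of that matrix. The only cosmetic differences are that you evaluate on all real classes while the paper restricts to the $\prim(G)$ classes of non-prime-power elements, and that you justify the identification $r=\rank_{\mathbb{C}}(M)$ via linear independence of the irreducible characters, where the paper picks explicit bases, asserts that the evaluation matrix has full rank, and then argues separately that the $\mathbb{Z}$-rank of the image equals the $\mathbb{Q}$-rank of the induction matrix.
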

\begin{proof}
Pick the bases $\epsilon=\{\varepsilon_1,...,\varepsilon_{t}\}$ and $\epsilon'=\{\varepsilon_1',...,\varepsilon_{t'}'\}$ of $\PO(H)$ and $\PO(G)$ respectively ($t'=\prim(G)$). We use the column convention for elements from $\PO(H)$ and $\PO(G)$ - we represent them as $t\times 1$ and $t'\times 1$ vectors respectively, where the coordinates are given by the bases $\epsilon$ and $\epsilon'$ accordingly. The induction map is a linear map and  denote by $M$ its matrix form in bases $\varepsilon$ and $\varepsilon'$.

Let $(g_1)^{\pm},...,(g_{t'})^{\pm}$ be the ordered list of all real conjugacy classes of $G$ whose elements are not of prime power order and let $\chi$ be the map which evaluates the characters of the elements of $\PO(G)$ on the classes $(g_i)^{\pm}$ for $i=1,...,t'$. Note by Lemma \ref{lemma:trace} that $\chi$ is well-defined and
$$
\chi\colon\PO(G)\rightarrow\mathbb{R}^{t'}
$$
$$
\varepsilon_j'\mapsto\begin{pmatrix}
    \varepsilon_j'(g_1) \\
    \vdots \\
    \varepsilon_j'(g_{t'})
\end{pmatrix}.
$$
Let $X=(\chi_{ij})_{1\leq i,j\leq t'}$ be the matrix of $\chi$, that is $\chi_{ij}=\varepsilon_j'(g_i)$. Clearly, $\rank_{\mathbb{R}}(X)=t'$. Consider the composition
$$
\chi\circ\Ind^G_H\colon\PO(H)\xrightarrow{\Ind_H^G}\PO(G)\xrightarrow{\chi}\mathbb{R}^{t'}.
$$
The matrix of $\chi\circ\Ind^G_H$ is a $t'\times t$ matrix $A=(a_{ij})_{1\leq i\leq t',1\leq j\leq t}$ given by $A=XM$. Thus, $a_{ij}=\Ind^G_H(\varepsilon_j)(g_i)$ for $1\leq i\leq t'$ and $1\leq j\leq t$. It follows from Theorem \ref{theorem:inducedCharacterFormula} that $\rank_{\mathbb{R}}(A)\leq s$. On the other hand, since $\rank_{\mathbb{R}}(X)=t'$, it follows that $\rank_{\mathbb{R}}(M)=\rank_{\mathbb{R}}(A)$. Therefore $\rank_{\mathbb{R}}(M)\leq s$. 

Now, since $M$ is an integer matrix and $\mathbb{R}$ is an extension of $\mathbb{Q}$, we conclude that the real rank of $M$ equals its rational rank, that is $\rank_{\mathbb{R}}(M)=\rank_{\mathbb{Q}}(M)=r'$. We show that $r=r'$ which would mean that $r\leq s$ and would complete the proof. 

Obviously, $r\geq r'$. Let $V=\langle \varepsilon_1',...,\varepsilon_{t'}'\rangle$. Take any $r'+1$ elements $v_1,...,v_{r'+1}$ from $\Ima(\Ind_H^G\colon\PO(H)\rightarrow\PO(G))$. They can be considered as vectors from $V$. Note that they are linearly dependent (over $\mathbb{Q})$, since the dimension of $\Ima(\Ind_H^G\colon\PO(H)\rightarrow\PO(G))$ considered as a subspace of $V$ equals $r'$. Let
\begin{equation}\label{equation:linearCombination}
\alpha_1v_1+...+\alpha_{r'+1}v_{r'+1}=0
\end{equation}
be a nontrivial combination. Suppose $\{\alpha_{i_1},...,\alpha_{i_k}\}$ is the set of all nonzero coefficients and $\alpha_{i_j}=p_j/q_j$, where $p_j,q_j\in\mathbb{Z}\setminus{\{0\}}$ for $1\leq j\leq k$. Multiplying both sides of equality (\ref{equation:linearCombination}) by $q_1...q_k$, we get a nontrivial integer combination of $v_j$'s. Thus $r\leq r'$ and, as a result, $r=r'$.
\end{proof}
\begin{lemma}\label{lemma:indMonoGeneral}
$\Ind_H^G\colon\PO(H)\rightarrow\PO(G)$ is a monomorphism if and only if $(h)_G^{\pm}\cap H=(h)_H^{\pm}$ for any $h\in H$ not of prime power order.
\end{lemma}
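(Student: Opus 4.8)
The plan is to prove both implications by comparing three integer ranks. Recall from the set-up that $\PO(H)$ is free abelian of rank $t=\prim(H)$, that $\PO(G)$ is free abelian of rank $t'=\prim(G)$, and that $\Ind_H^G$ is a monomorphism exactly when $r=t$, where $r$ is the rank of its image. Lemma~\ref{lemma:boundForRank} gives the upper bound $r\le s$, and we already know $s\le t$, so $r\le s\le t$. To organize the argument I would first translate the class-theoretic hypothesis into the single numerical condition $s=t$, and then supply the matching lower bound for $r$ by a direct computation with induced characters.

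First I would show that the condition ``$(h)_G^{\pm}\cap H=(h)_H^{\pm}$ for every $h\in H$ not of prime power order'' is equivalent to $s=t$. Consider the map sending a real conjugacy class of $H$ consisting of non-prime-power elements to the real conjugacy class of $G$ containing it; its image is exactly the collection of $s$ real $G$-classes meeting $H$ at non-prime-power elements, so it is a surjection from a $t$-element set onto an $s$-element set. Thus $s=t$ iff this map is injective, and injectivity says precisely that two non-prime-power elements of $H$ real conjugate in $G$ are already real conjugate in $H$. A short check, using that $(h)_H^{\pm}\subseteq (h)_G^{\pm}\cap H$ always holds and that real conjugacy preserves order, shows this is equivalent to the stated condition. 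This yields the easy implication at once: if $\Ind_H^G$ is a monomorphism then $r=t$, so $t\le s\le t$ forces $s=t$, and hence the condition holds.

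For the converse I would argue injectivity directly, since the rank bound $r\le s$ only bounds $r$ from above and cannot by itself produce $r=t$. Assume the condition holds and suppose $\Ind_H^G(U-V)=0$ for some $U-V\in\PO(H)$. Writing $\theta=\chi_U-\chi_V$, this is a real-valued virtual character of $H$ vanishing on all elements of prime power order (because $U$ and $V$ are $\mathcal{P}$-matched), with $\Ind_H^G(\theta)=0$. Fix any $h\in H$ not of prime power order and evaluate at $g=h$ via Theorem~\ref{theorem:inducedCharacterFormula}(2):
$$
0=\Ind_H^G(\theta)(h)=|C_G(h)|\sum_{k=1}^{m}\frac{\theta(h_k)}{|C_H(h_k)|},
$$
where $h_1,\dots,h_m$ represent the distinct $H$-conjugacy classes inside $H\cap (h)_G$. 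The hypothesis gives $H\cap (h)_G\subseteq H\cap (h)_G^{\pm}=(h)_H^{\pm}=(h)_H\cup (h^{-1})_H$, so each $h_k$ is $H$-conjugate to $h$ or to $h^{-1}$, with $(h)_H$ occurring. Since $\theta$ is real we have $\theta(h^{-1})=\theta(h)$ and $|C_H(h^{-1})|=|C_H(h)|$, so every summand equals $\theta(h)/|C_H(h)|$ and the equation collapses to $m\,|C_G(h)|\,\theta(h)/|C_H(h)|=0$ with $m\ge 1$. Hence $\theta(h)=0$. As $\theta$ already vanishes on prime-power-order elements, $\theta\equiv 0$, so $U-V=0$ and $\Ind_H^G$ is injective.

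The main obstacle is precisely this lower-bound half. Dimension counting alone is powerless: Lemma~\ref{lemma:boundForRank} never yields more than $r\le s$, so injectivity of induction must be extracted from the explicit induced character formula. The delicate point is that the formula involves ordinary $G$-conjugacy while the hypothesis is stated in terms of real conjugacy; the two are reconciled by the reality of characters coming from $\RO$-elements, which ensures that when both $(h)_H$ and $(h^{-1})_H$ appear in $H\cap(h)_G$ their contributions \emph{add} rather than cancel. Were the relevant characters not real-valued, this cancellation could destroy injectivity, so the use of real (rather than complex) representation theory is essential at exactly this step.
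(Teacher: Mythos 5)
Your proposal is correct and follows essentially the same route as the paper: the ``only if'' direction via Lemma~\ref{lemma:boundForRank} together with the observation that the stated condition is equivalent to $s=t$, and the ``if'' direction by evaluating the induced character formula of Theorem~\ref{theorem:inducedCharacterFormula} at a non-prime-power element $h$ and using reality of the characters so that the contributions of $(h)_H$ and $(h^{-1})_H$ add to a nonzero multiple of $\theta(h)$. The paper phrases the injectivity half as separating two distinct elements of $\PO(H)$ rather than killing the kernel, and states the other half contrapositively, but these are only cosmetic differences.
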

\begin{proof}
Suppose that for any $h\in H$ not of prime power order we have $(h)^{\pm}_G\cap H=(h)^{\pm}_H$. Let $x_1$ and $x_2$ be two different elements of $\PO(H)$. We must show that $\Ind_H^G(x_1)\neq\Ind_H^G(x_2)$. There exists $h\in H$ not of prime power order with $x_1(h)\neq x_2(h)$. We have two possibilities. The first one is when $(h)_H=(h^{-1})_H=(h)_G^{\pm}\cap H$. Thus 
$$(h)_G\cap H=(h^{-1})_G\cap H=(h)^{\pm}_G\cap H=(h)^{\pm}_H=(h)_H$$
and it follows by Theorem \ref{theorem:inducedCharacterFormula} that
$$
\begin{tabular}{ccc}
$
\Ind_H^G(x_1)(h)=\frac{|C_G(h)|}{|C_H(h)|}x_1(h)
$
&
and
&
$
\Ind_H^G(x_2)(h)=\frac{|C_G(h)|}{|C_H(h)|}x_2(h).
$
\end{tabular}
$$
 Therefore $\Ind_H^G(x_1)(h)\neq\Ind_H^G(x_2)(h)$ since $x_1(h)\neq x_2(h)$. In the second possibility, we have $(h)_H\neq(h^{-1})_H$. If $(h)_G\cap H=(h)_H$, we have already proved the assertion. Assume $(h)_H\subsetneq(h)_G\cap H$. Note that $$((h)_G\cap H)\cup((h^{-1})_G\cap H)=(h)_G^{\pm}\cap H=(h)_H\cup(h^{-1})_H.$$ Clearly $(h^{-1})_H\subseteq(h^{-1})_G\cap H$, which in connection with $(h)_H\subsetneq(h)_G\cap H$ gives from the equalities above $(h)_G\cap H=(h^{-1})_G\cap H=(h)_H\cup(h^{-1})_H$. Note that $|C_H(h)|=|C_H(h^{-1})|$. Thus by Theorem \ref{theorem:inducedCharacterFormula} and Lemma \ref{lemma:trace}, we get 
 $$
\begin{tabular}{ccc}
$
\Ind_H^G(x_1)(h)=2\frac{|C_G(h)|}{|C_H(h)|}x_1(h)
$
&
and
&
$
\Ind_H^G(x_2)(h)=2\frac{|C_G(h)|}{|C_H(h)|}x_2(h).
$
\end{tabular}
$$
Thus $\Ind_H^G(x_1)(h)\neq\Ind_H^G(x_2)(h)$.

We prove now the converse. Suppose $\Ind_H^G\colon\PO(H)\rightarrow\PO(G)$ is a monomorphism. Assume for the contrary that there exists $h\in H$ not of prime power order with $(h)^{\pm}_G\cap H\neq(h)^{\pm}_H$. Then $(h)^{\pm}_H\subsetneq(h)^{\pm}_G\cap H$ and thus $s<t$. Hence, it follows by Lemma \ref{lemma:boundForRank} that $\rank(\Ima(\Ind_H^G\colon\PO(H)\rightarrow\PO(G)))<t$ and $\Ind_H^G\colon\PO(H)\rightarrow\PO(G)$ is not injective which is a contradiction with our assumption.
\end{proof}
\begin{corollary}\label{corollary:inducedNormal}
Assume $N$ is a normal subgroup of $G$. Then $\Ind_N^G\colon\PO(N)\rightarrow\PO(G)$ is a monomorphism if and only if $(n)_G^{\pm}=(n)_N^{\pm}$ for any $n\in N$ not of prime power order.
\end{corollary}
\begin{corollary}\label{corollary:inductionNpq2}
$\Ind_{N_{pq^2}}^{G_{p,q}}\colon\PO(N_{pq^2})\rightarrow\PO(G_{p,q})$ is a monomorphism.
\end{corollary}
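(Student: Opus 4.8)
The plan is to invoke Corollary \ref{corollary:inducedNormal} with $N=N_{pq^2}$. Since $N_{pq^2}=\mathcal{O}^2(G_{p,q})$ by Corollary \ref{corollary:largeGpq}, it is normal of index $2$ in $G_{p,q}$, so the corollary applies and it suffices to verify that $(n)^{\pm}_{G_{p,q}}=(n)^{\pm}_{N_{pq^2}}$ for every $n\in N_{pq^2}$ not of prime power order.

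First I would identify the relevant elements. By the orders recorded in Table \ref{table:charactersNpq2} (equivalently, the $N_{pq^2}$-rows of Table \ref{table:classesGpq}), the orders occurring in $N_{pq^2}$ are $1,p,q$ and $pq$, so the only elements of order divisible by two distinct primes are those of order $pq$. Under the identification $a\leftrightarrow(a,1)$, $c\leftrightarrow(1,c)$ of Lemma \ref{lemma:isom}, these are exactly the elements $A_l=(a^l,1)$ with $p\nmid l$ and $q\nmid l$.

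Next I would compute and compare the two real conjugacy classes of such an $A_l$. On the $G_{p,q}$ side, case (3) of the conjugacy-class analysis already gives
$$
(A_l)_{G_{p,q}}=\{(a^{li^m},1),(a^{-li^m},1)\mid m=0,\ldots,q-1\},
$$
a set of $2q$ elements visibly closed under inversion, so $(A_l)^{\pm}_{G_{p,q}}=(A_l)_{G_{p,q}}$. On the $N_{pq^2}$ side, conjugation by $c^m$ sends $a^l$ to $a^{li^m}$ while conjugation by powers of $a$ fixes $a^l$; since $i$ has order $q$ modulo $p$ and $i\equiv 1\Mod{q}$, the class $(a^l)_{N_{pq^2}}=\{a^{li^m}\mid m=0,\ldots,q-1\}$ has exactly $q$ elements. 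Here $a^l$ and $a^{-l}$ are \emph{not} $N_{pq^2}$-conjugate, because $-l\equiv li^m\Mod{pq}$ would force $2l\equiv 0\Mod{q}$, impossible as $q$ is odd and $q\nmid l$. Hence the real class $(a^l)^{\pm}_{N_{pq^2}}=\{a^{\pm li^m}\}$ has $2q$ elements and, under the identification, coincides precisely with $(A_l)_{G_{p,q}}$.

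Combining these equalities yields $(n)^{\pm}_{G_{p,q}}=(n)^{\pm}_{N_{pq^2}}$ for every $n\in N_{pq^2}$ not of prime power order, and Corollary \ref{corollary:inducedNormal} gives the claim. The only delicate point is the conjugacy bookkeeping in the third step: inside $G_{p,q}$ the extra generator $b$ fuses the two $N_{pq^2}$-classes $(a^l)$ and $(a^{-l})$ into a single conjugacy class, and I expect the main verification to be that passing to \emph{real} classes in $N_{pq^2}$ reproduces exactly this fusion — which holds precisely because $q\nmid l$ keeps $a^l$ from already being conjugate to its inverse within $N_{pq^2}$.
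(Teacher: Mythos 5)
Your proposal is correct and follows essentially the same route as the paper: reduce via Corollary \ref{corollary:inducedNormal} to comparing real conjugacy classes, observe that the only non-prime-power elements of $N_{pq^2}$ are the order-$pq$ elements $(a^l,1)$ with $p,q\nmid l$, and show that $b$-conjugation in $G_{p,q}$ fuses exactly the two $N_{pq^2}$-classes $(a^l)$ and $(a^{-l})$, which the real class in $N_{pq^2}$ already accounts for. The paper finishes by a cardinality count ($2q=2q$) where you exhibit the sets explicitly, but this is an inessential difference.
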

\begin{proof}
By Corollary \ref{corollary:inducedNormal}, it suffices to show that for any $n\in N_{pq^2}$ not of prime power order, we have $(n)^{\pm}_{G_{p,q}}=(n)^{\pm}_{N_{pq^2}}$. We know by Table \ref{table:classesGpq} that $n$ has to be of order $pq$ and $n=(a^l,1)$ for some $l$ not divisible by $p$ and $q$. From the proof of Lemma \ref{lemma:primaryNumber}, we know that $n^{-1}=(a^{-l},1)$ and $n$ are conjugate in $G_{p,q}$. Thus $(n)^{\pm}_{G_{p,q}}=(n)_{G_{p,q}}$. On the other hand $n$ and $n^{-1}$ are not conjugate in $N_{pq^2}$. Otherwise, there would exists $(a^{l'},c^{m'})$ such that
$$
(a^{l'},c^{m'})(a^l,1)(a^{l'},c^{m'})^{-1}=(a^{-l},1).
$$
Thus $(a^{li^{m'}},1)=(a^{-l},1)$ which cannot be true since $li^{m'}\equiv l\not\equiv -l\Mod{q}$ for $q\nmid l$. Therefore $(n^{-1})_{N_{pq^2}}\neq(n)_{N_{pq^2}}$ and it follows by Table \ref{table:charactersNpq2} that $|(n^{\pm})_{N_{pq^2}}|=2q$. On the other hand, $|(n)^{\pm}_{G_{p,q}}|=|(n)_{G_{p,q}}|=2q$, and the assertion follows.
\end{proof}
\section{Proof of Theorem \ref{theorem:main}}
We use the following result of Morimoto.
\begin{theorem}\cite[Theorem 1.9]{Morimoto2012}\label{theorem:mor}
 Let $H$ be a subgroup of an Oliver group $G$. If $U-V\in\PO_{\rm w}^{\mathcal{L}}(H)$, then there exists an $\mathbb{R}G$-module $W$ such that $\Ind^G_H(U)\oplus W$ and $\Ind^G_H(V)\oplus W$ are Smith equivalent $\mathbb{R}G$-modules.
\end{theorem}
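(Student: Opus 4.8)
The plan is to realize the asserted Smith equivalence geometrically: I would produce a smooth action of $G$ on a homotopy sphere $\Sigma$ with $\Sigma^G=\{x,y\}$ whose tangent $\mathbb{R}G$-modules at $x$ and $y$ are $\Ind_H^G(U)\oplus W$ and $\Ind_H^G(V)\oplus W$ for a suitably chosen $\mathbb{R}G$-module $W$. Write $U'=\Ind_H^G(U)$ and $V'=\Ind_H^G(V)$. The first step is to transport the defining properties of $\PO^{\mathcal{L}}_{\rm w}(H)$ across the induction. Using the double-coset description of fixed-point dimensions coming from Frobenius reciprocity and the Mackey formula,
\[
\dim(\Ind_H^G X)^K=\sum_{g\in K\backslash G/H}\dim X^{\,g^{-1}Kg\cap H},
\]
I would verify that $U'$ and $V'$ are $\mathcal{P}$-matched over $G$ (this is already recorded in the introduction) and that, after stabilising by a free summand absorbed into $W$, they inherit a weak-gap-type system of inequalities together with the vanishing $\dim(U')^L=\dim(V')^L=0$ for every large $L\in\mathcal{L}(G)$. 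These translated inequalities are exactly the numerical input required by the subsequent construction.

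Second, I would build the two hemispheres as smooth $G$-disks $D_x$ and $D_y$, each with a single $G$-fixed point, with tangent representations $U'\oplus W$ and $V'\oplus W$ respectively, and with $G$-diffeomorphic boundaries $\partial D_x\cong\partial D_y$. The geometric engine here is the equivariant deletion--insertion construction of Oliver, Morimoto and Pawa\l owski, which is available precisely because $G$ is an \emph{Oliver group}: one starts from the linear disks $D(U'\oplus W)$ and $D(V'\oplus W)$, excises an invariant tubular neighbourhood of the fixed-point data lying away from $x$ and $y$, and reinserts an Oliver-type equivariantly contractible $G$-manifold, so that the two boundaries become one common $G$-homotopy sphere while the germ at the central fixed point is left untouched. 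Choosing $W$ large enough forces the strict gap inequalities that legitimise performing these replacements in codimension at least three.

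Third, gluing $D_x$ to $D_y$ along the common boundary yields a closed smooth $G$-manifold $\Sigma$ with $\Sigma^G=\{x,y\}$ and the prescribed tangent representations; what remains is to arrange that $\Sigma$ is a homotopy sphere. The $\mathcal{P}$-matching of $U'$ and $V'$ guarantees that their restrictions to every prime-power subgroup $P$ coincide, so by Smith theory the $P$-fixed sets of the two halves fit together into fixed sets having the homology of spheres, which is what allows the boundary identification to be made $G$-equivariantly. Equivariant surgery, carried out stratum by stratum from smaller to larger isotropy, is then used to kill the remaining homology of the complement of $\Sigma^G$; the $\mathcal{L}$-vanishing keeps the action sufficiently close to free on the large subgroups so that $\Sigma^G$ does not grow beyond $\{x,y\}$.

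The hard part will be this last surgery step: ensuring that the surgeries can be realised $G$-equivariantly and that the accumulated surgery obstructions vanish, so that $\Sigma$ is genuinely a homotopy sphere and the tangent data at $x$ and $y$ is preserved. This is exactly where the \emph{weak gap condition} is indispensable, since it forces $\dim\Sigma^P\geq 2\dim\Sigma^K$ whenever $P<K$ with $P$ of prime power order, placing every singular stratum in codimension at least three relative to the next and putting the relevant surgery problems in the metastable, $\pi$--$\pi$ range where the obstructions are controllable. Once the surgery is completed, the resulting $\Sigma$ is a homotopy sphere with $\Sigma^G=\{x,y\}$ whose tangent modules are $U'\oplus W$ and $V'\oplus W$, which is precisely the statement that $\Ind_H^G(U)\oplus W$ and $\Ind_H^G(V)\oplus W$ are Smith equivalent $\mathbb{R}G$-modules.
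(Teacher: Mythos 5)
This statement is not proved in the paper at all: it is quoted verbatim as Theorem 1.9 of Morimoto \cite{Morimoto2012}, whose proof occupies a substantial part of that paper (the deleting--inserting theorems under the weak gap condition). So the honest standard of comparison is Morimoto's proof, and measured against that, your proposal is a plausible outline of the right \emph{program} but not a proof; the steps you defer are exactly the content of the cited paper. Concretely: making the boundaries of the two disks $G$-diffeomorphic is the heart of the deleting--inserting construction, and it requires Oliver's equivariantly contractible manifolds together with a vanishing analysis of equivariant surgery obstructions stratum by stratum --- your sketch names this machinery but supplies none of it. Likewise, your appeal to ``Smith theory plus $\mathcal{P}$-matching'' to justify the boundary identification conflates a necessary condition on fixed sets of $p$-group actions with the actual gluing argument.

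One specific step in your sketch is wrong, not merely incomplete. You claim the weak gap condition ``places every singular stratum in codimension at least three relative to the next and puts the relevant surgery problems in the metastable, $\pi$--$\pi$ range.'' The weak gap condition only demands $\dim V^P\geq 2\dim V^H$, so equality $\dim\Sigma^P=2\dim\Sigma^H$ is permitted; this borderline case is precisely where the classical strict-gap surgery arguments break down, and overcoming it is the main novelty of \cite{Morimoto2012}. A proof that silently upgrades $\geq$ to $>$ proves a weaker theorem (essentially the older gap-condition results) and would not suffice here, since the paper applies the theorem to $2\Rea\psi_{s,t}$, which is only shown to satisfy the \emph{weak} gap condition (Corollary \ref{corollary:weakGapNpq2}). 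In addition, your first step --- transporting the weak gap inequalities across $\Ind_H^G$ via the Mackey formula --- is asserted rather than verified: the double cosets for $P<K$ do not align term by term, and stabilizing by a free summand only improves the inequalities for $P=\{1\}$, since a free module has $\dim W^K=0$ for all nontrivial $K$. (The $\mathcal{L}$-vanishing part does transfer, because $g^{-1}Lg\cap H$ contains $\mathcal{O}^p(H)$ whenever $L\supseteq\mathcal{O}^p(G)$, but that is the easy half.) In short: right strategy, but the load-bearing steps are missing or, in the weak-gap claim, incorrect, which is why the paper cites Morimoto rather than arguing as you do.
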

We can prove now the main theorem of this paper.
\begin{proof}[Proof of Theorem \ref{theorem:main}]
 Lemmas \ref{lemma:primaryNumber} and \ref{lemma:specialOliver} tell us that $G_{p,q}$ is a special Oliver group with primary number at least $2$. By Lemma \ref{lemma:noznzeroPO}, there exist non-isomorphic $\mathbb{R}G$-modules $U$ and $V$ with $U-V\in\PO_{\rm w}^{\mathcal{L}}(N_{pq^2})$. Thus, by Corollary \ref{corollary:inductionNpq2}, $\Ind^{G_{p,q}}_{N_{pq^2}}(U)$ and $\Ind^{G_{p,q}}_{N_{pq^2}}(V)$ are not isomorphic $\mathbb{R}G_{p,q}$-modules. Moreover, by Theorem \ref{theorem:mor}, it follows that there exists an $\mathbb{R}G_{p,q}$-module $W$ such that $\Ind^{G_{p,q}}_{N_{pq^2}}(U)\oplus W$ and $\Ind^{G_{p,q}}_{N_{pq^2}}(V)\oplus W$ are Smith equivalent. Obviously, these modules are not isomorphic.
\end{proof}
 \section*{Acknowledgements}
 The author would like to thank the referee for his valuable remarks. Also, I would like to thank Prof. Krzysztof Pawałowski for his comments and interest in the results presented here. I am also grateful to Dr. Marek Kaluba for remarks which substantially improved the presentation of this paper. I would like to thank all the participants of the algebraic topology seminar held at Adam Mickiewicz University for helpful comments.


\newpage
\bibliographystyle{acm}

\textcolor{white}{fsf}\\
\emph{Faculty of Mathematics and Computer Science}\\
\emph{Adam Mickiewicz University in Poznań}\\
\emph{ul Uniwersytetu Poznańskiego 4}\\
\emph{61-614 Poznań, Poland}\\
\emph{Email address:} piotr.mizerka@amu.edu.pl







\end{document}